\begin{document}

\newcommand{\commA}[2][]{\todo[#1,color=yellow]{A: #2}}
\newcommand{\commI}[2][]{\todo[#1,color=green!60]{I: #2}}
    
\newtheorem{theorem}{Theorem}
\newtheorem{lemma}[theorem]{Lemma}
\newtheorem{example}[theorem]{Example}
\newtheorem{algol}{Algorithm}
\newtheorem{corollary}[theorem]{Corollary}
\newtheorem{prop}[theorem]{Proposition}
\newtheorem{definition}[theorem]{Definition}
\newtheorem{question}[theorem]{Question}
\newtheorem{problem}[theorem]{Problem}
\newtheorem{remark}[theorem]{Remark}
\newtheorem{conjecture}[theorem]{Conjecture}

\def\xxx{\vskip5pt\hrule\vskip5pt}

\def\Cmt#1{\underline{{\sl Comments:}} {\it{#1}}}

\newcommand{\Modp}[1]{
\begin{color}{blue}
 #1\end{color}}
 
 \def\bl#1{\begin{color}{blue}#1\end{color}} 
 \def\red#1{\begin{color}{red}#1\end{color}} 

%\newcommand{\eqname}[1]{\tag{#1}}% Tag equation with name

%%%%%%%%%%%%%%%%%%%%%%%%%
% Alphabet calligraphic %
%%%%%%%%%%%%%%%%%%%%%%%%%
\def\cA{{\mathcal A}}
\def\cB{{\mathcal B}}
\def\cC{{\mathcal C}}
\def\cD{{\mathcal D}}
\def\cE{{\mathcal E}}
\def\cF{{\mathcal F}}
\def\cG{{\mathcal G}}
\def\cH{{\mathcal H}}
\def\cI{{\mathcal I}}
\def\cJ{{\mathcal J}}
\def\cK{{\mathcal K}}
\def\cL{{\mathcal L}}
\def\cM{{\mathcal M}}
\def\cN{{\mathcal N}}
\def\cO{{\mathcal O}}
\def\cP{{\mathcal P}}
\def\cQ{{\mathcal Q}}
\def\cR{{\mathcal R}}
\def\cS{{\mathcal S}}
\def\cT{{\mathcal T}}
\def\cU{{\mathcal U}}
\def\cV{{\mathcal V}}
\def\cW{{\mathcal W}}
\def\cX{{\mathcal X}}
\def\cY{{\mathcal Y}}
\def\cZ{{\mathcal Z}}

\def\C{\mathbb{C}}
\def\F{\mathbb{F}}
\def\K{\mathbb{K}}
\def\L{\mathbb{L}}
\def\G{\mathbb{G}}
\def\Z{\mathbb{Z}}
\def\R{\mathbb{R}}
\def\Q{\mathbb{Q}}
\def\N{\mathbb{N}}
\def\M{\textsf{M}}
\def\U{\mathbb{U}}
\def\P{\mathbb{P}}
\def\A{\mathbb{A}}
\def\fp{\mathfrak{p}}
\def\n{\mathfrak{n}}
\def\X{\mathcal{X}}
\def\x{\textrm{\bf x}}
\def\w{\textrm{\bf w}}
\def\a{\textrm{\bf a}}
\def\k{\textrm{\bf k}}
\def\ee{\textrm{\bf e}}
\def\ovQ{\overline{\Q}}
\def \Kab{\K^{\mathrm{ab}}}
\def \Qab{\Q^{\mathrm{ab}}}
\def \Qtr{\Q^{\mathrm{tr}}}
\def \Kc{\K^{\mathrm{c}}}
\def \Qc{\Q^{\mathrm{c}}}
\newcommand \rank{\operatorname{rk}}
\def\ZK{\Z_\K}
\def\ZKS{\Z_{\K,\cS}}
\def\ZKSf{\Z_{\K,\cS_f}}
\def\ZKSfG{\Z_{\K,\cS_{f,\Gamma}}}

\def\bF{\mathbf {F}}

\def\({\left(}
\def\){\right)}
\def\[{\left[}
\def\]{\right]}
\def\<{\langle}
\def\>{\rangle}

\def\gen#1{{\left\langle#1\right\rangle}}
\def\genp#1{{\left\langle#1\right\rangle}_p}
\def\genPs{{\left\langle P_1, \ldots, P_s\right\rangle}}
\def\genPsp{{\left\langle P_1, \ldots, P_s\right\rangle}_p}

\def\e{e}

\def\eq{\e_q}
\def\fh{{\mathfrak h}}

\def\lcm{{\mathrm{lcm}}\,}

\def\({\left(}
\def\){\right)}
\def\fl#1{\left\lfloor#1\right\rfloor}
\def\rf#1{\left\lceil#1\right\rceil}
\def\mand{\qquad\mbox{and}\qquad}

\def\jt{\tilde\jmath}
\def\ellmax{\ell_{\rm max}}
\def\llog{\log\log}

\def\m{{\rm m}}
\def\ch{\hat{h}}
\def\GL{{\rm GL}}
\def\Orb{\mathrm{Orb}}
\def\Per{\mathrm{Per}}
\def\Preper{\mathrm{Preper}}
\def \S{\mathcal{S}}
\def\vec#1{\mathbf{#1}}
\def\ov#1{{\overline{#1}}}
\def\Gal{{\mathrm Gal}}
\def\Sp{{\mathrm S}}
\def\tors{\mathrm{tors}}
\def\PGL{\mathrm{PGL}}
\def\wH{{\rm H}}
\def\Gm{\G_{\rm m}}

\def\house#1{{%
    \setbox0=\hbox{$#1$}
    \vrule height \dimexpr\ht0+1.4pt width .5pt depth \dp0\relax
    \vrule height \dimexpr\ht0+1.4pt width \dimexpr\wd0+2pt depth \dimexpr-\ht0-1pt\relax
    \llap{$#1$\kern1pt}
    \vrule height \dimexpr\ht0+1.4pt width .5pt depth \dp0\relax}}

\newcommand{\bfalpha}{{\boldsymbol{\alpha}}}
\newcommand{\bfomega}{{\boldsymbol{\omega}}}

\newcommand{\Ch}{{\operatorname{Ch}}}
\newcommand{\Elim}{{\operatorname{Elim}}}
\newcommand{\proj}{{\operatorname{proj}}}
\newcommand{\h}{{\operatorname{\mathrm{h}}}}
\newcommand{\ord}{\operatorname{ord}}

\newcommand{\hh}{\mathrm{h}}
\newcommand{\aff}{\mathrm{aff}}
\newcommand{\Spec}{{\operatorname{Spec}}}
\newcommand{\Res}{{\operatorname{Res}}}

\def\fA{{\mathfrak A}}
\def\fB{{\mathfrak B}}

\numberwithin{equation}{section}
\numberwithin{theorem}{section}

\title[bounds for integral points in orbits over function fields]
{ On effective $\epsilon$-integrality in orbits  of rational maps over function fields and multiplicative dependence}

\author[Jorge Mello]{Jorge Mello}

\address{University of New South Wales. mailing adress:\newline School of Mathematics and Statistics
UNSW Sydney 
NSW, 2052
Australia.} 

\email{j.mello@unsw.edu.au}

 \keywords{Integral points on orbits, arithmetic dynamics, quantitative estimates.} 

\begin{abstract} We give effective bounds for the set quasi-integral points in orbits of non-isotrivial rational maps over function fields under some conditions, generalizing previous work of Hsia and Silverman (2011)
for orbits over function fields of characteristic zero. We then use this to prove finiteness results for algebraic functions whose orbit under a rational function has multiplicative dependent elements modulo rings of $S$-integers, generalizing recent results over number fields.
\end{abstract}

\maketitle
\section{Introduction} Let $K$ be a function field of a smooth projective curve over an algebraically closed field of characteristic $0$, endowed as usual with a set $M_K$ of absolute values(places) satisfying the product formula, $S$ a finite subset of places of $M_K$, and $\epsilon >0$.
An element $x \in K$ is said to be \textit{quasi-$(S,\epsilon)$-integral} if 
\begin{center}
 $\displaystyle\sum_{v \in S}  \log (\max \{|x|_v, 1 \}) \geq \epsilon h([x,1])$,
 \end{center} where $h$ is the  absolute logarithmic height in $\mathbb{P}^1(K)$ and $[x,1] \in \mathbb{P}^1(K)$.
 
 Let $ \phi \in K(z)$ a of rational functions of degree at least $2$, let $P \in K$ and let 
\begin{center}
$\mathcal{O}_{\phi}(P)= \{ \phi^{(n)} (P) |  n \in \mathbb{N} \}$
\end{center}
denote the forward orbit of $P$ under $\phi$.
When $K$ is a number field and $\phi^2= \phi \circ \phi \notin K[z]$, Hsia and Silverman proved \cite{HS} that the number of quasi-$(S, \epsilon)$- integral points in the orbit of a point $P$ with infinite orbit is bounded by a constant depending only on $\phi, \hat{h}_{\phi}(P), \epsilon, S$, and $[K:\mathbb{Q}]$ ( see Section 2 for the correspondent definitions). We also note that these results, according to \cite[Remark 1]{HS}, have some applications as  the existence of quantitative estimates for the size of Zsigmondy sets for such orbits and their primitive divisors, as well for quantitative versions of a dynamical local-global principle in orbits on the projective line. This research was also used to prove finiteness of multiplicatively dependent iterated values by rational functions in \cite{BOSS}. 

In this present paper we generalize this bound for cases over function fields. This is presented all over Section 2. 
Making use of such results, this work is also place for the study of multiplicative dependence modulo $S$-integers for elements in orbits of rational functions over function fields, generalizing the results of \cite{BOSS} for this context, but now also with effective bounds. This is done in Section 3.  Sometimes it is only possible to bound, although effectively, the height of algebraic functions studied, instead of their cardinality. This is related with the fact that even the so called \textit{Northcott finiteness property} (which is a particular case of finiteness for elements with multiplicative dependent orbit elements) fails over function fields. One can see once again that the ambient of function fields is relevantly different to number field one. For this, we  made use of specific tools from the function field case. Namely, a recent version of effective Roth type of theorem over function fields due to Wang \cite{W1}, a certain finiteness property for canonical heights due to Baker \cite{B}, and some effective results for superellitic equations over function fields in one variable.

 \section{Effective bounds for quasiintegral points in orbits over function fields}
\subsection{Canonical Heights, Distance and dynamics on the projective line} We always assume that $K$ is a fixed function field of a curve over an algebraically closed field of characteristic $0$ and $K(z)$ is the field of rational functions over $K$ for the rest of the paper. We identify $K \cup \{ \infty \} = \mathbb{P}^1(K)$ by fixing an affine coordinate $z$ on $\mathbb{P}^1$, so $\alpha \in K$ is equal to $[\alpha, 1] \in \mathbb{P}^1(K)$, and the point at infinity is $[1,0]$. In this way, we assume $z$ is the first left coordinate for points in $\mathbb{P}^1$,
and with respect to this affine coordinate, we identify rational self-maps of $\mathbb{P}^1$ with rational functions in $K(z)$. 
 
 If $P= [x_0,...,x_N] \in \mathbb{P}^N(K)$, the naive logarithmic height is given by 
\begin{center}$h(P)= \sum_{v \in M_K}  \log(\max_i |x_i|_v)$, \end{center}
where, $M_K$ is the set of places of $K$, and for each $v \in M_K$, $|.|_v$ denotes the corresponding absolute values on $K$ satisfying the product formula, and can be extended to any algebraic closure and respective completions of $K$, so that $h$ can be well defined on $\overline{K}$. Also, we write $K_v$ for the completion of $K$ with respect to $|.|_v$, and we let $\hat{K}_v$ denote the completion of an algebraic closure of $K_v$.
Initially, we also recall  that one can define the convergent limit $\hat{h}_f(\alpha)=\lim_{n \rightarrow \infty} \dfrac{h(f(\alpha))}{d^n}$ for any $\alpha \in K$, called the \textit{canonical height} associated with $f$, which also satisfies $\hat{h}_f(f(\alpha))=d\hat{h}_f(\alpha)$, and that $\alpha$ has infinite orbit (is not \textit{preperiodic}) if and only if $\hat{h}_f(f(\alpha))>0$( see \cite[Theorem 3.20]{Silv2}).

  For each $v \in M_K$, we let $\rho_v$ denote the chordal metric defined on $\mathbb{P}^1(\hat{K}_v)$, where we recall that for 
  $[x_1,y_1], [x_2,y_2] \in \mathbb{P}^1(\hat{K}_v)$,
  \begin{center} $  
\rho_v([x_1,y_1], [x_2,y_2]) = 
     \dfrac{|x_1y_2 - x_2y_1|_v}{\max \{ |x_1|_v,|y_1|_v \} \max \{ |x_2|_v,|y_2|_v \}}.
      $ \end{center}
 \begin{definition} The \textit{logarithmic chordal metric function} 
 \begin{center}
  $\lambda_v : \mathbb{P}^1(\hat{K}_v) \times \mathbb{P}^1(\hat{K}_v) \rightarrow \mathbb{R} \cup \{ \infty \}  $
  \end{center} is defined by
  \begin{center}
   $\lambda_v([x_1,y_1], [x_2,y_2]) =- \log \rho_v([x_1,y_1], [x_2,y_2]).$
  \end{center}
  \end{definition}
  
  It is a matter of fact that $\lambda_v$ is a particular choice of an \textit{arithmetic distance function} as defined by Hsia and Silverman \cite{HS} over number fields, which is
  a local height function $\lambda_{\mathbb{P}^1 \times \mathbb{P}^1, \Delta}$, where $\Delta$ is the diagonal of $\mathbb{P}^1 \times \mathbb{P}^1$.
  The logarithmic chordal metric and the usual metric can relate in the following way.
 \begin{lemma}\label{lem2.2}
  Let $v \in M_K$ and let $\lambda_v$ be the logarithmic chordal metric on $\mathbb{P}^1(\mathbb{C}_v)$. 
  Then for $x,y \in \mathbb{C}_v$ the inequality $\lambda_v(x,y) > \lambda_v(y,\infty) $ implies
  \begin{center}
   $ \lambda_v(y,\infty) \leq \lambda_v(x,y) + \log |x-y|_v \leq 2 \lambda_v(y,\infty)$.
  \end{center}

 \end{lemma}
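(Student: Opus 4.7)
The plan is to expand the definition of $\lambda_v$ explicitly and then reduce the two inequalities to an elementary statement about absolute values, exploiting that every place of a function field over an algebraically closed field of characteristic zero is non-Archimedean.

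First I would unwind the definitions. Writing $x = [x,1]$, $y = [y,1]$, $\infty = [1,0]$ in $\mathbb{P}^1(\hat{K}_v)$, the chordal formula gives
\begin{align*}
\lambda_v(x,y) &= -\log|x-y|_v + \log\max\{|x|_v,1\} + \log\max\{|y|_v,1\},\\
\lambda_v(y,\infty) &= \log\max\{|y|_v,1\}.
\end{align*}
Thus the quantity of interest simplifies at once to
\[
\lambda_v(x,y) + \log|x-y|_v = \log\max\{|x|_v,1\} + \log\max\{|y|_v,1\},
\]
and the hypothesis $\lambda_v(x,y) > \lambda_v(y,\infty)$ rewrites as the single inequality $|x-y|_v < \max\{|x|_v,1\}$.

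With this reformulation, the left inequality in the conclusion is immediate from $\log\max\{|x|_v,1\} \geq 0$, so only the upper bound $\log\max\{|x|_v,1\} \leq \log\max\{|y|_v,1\}$ remains. I would handle this by splitting into two cases. If $|x|_v \leq 1$, then $\max\{|x|_v,1\} = 1 \leq \max\{|y|_v,1\}$ trivially. If $|x|_v > 1$, then the hypothesis reads $|x-y|_v < |x|_v$; since every absolute value on $K$ (and hence on $\hat{K}_v \supseteq \mathbb{C}_v$) is non-Archimedean, the strict inequality of $|x-y|_v$ and $|x|_v$ forces
\[
|y|_v = |x - (x-y)|_v = \max\{|x|_v,|x-y|_v\} = |x|_v,
\]
by the standard ``strongest wins'' form of the ultrametric inequality. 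Hence $\max\{|y|_v,1\} = |x|_v = \max\{|x|_v,1\}$, again giving the required bound.

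The proof is essentially a direct computation, so there is no real obstacle. The only conceptual point worth flagging is the reliance on the ultrametric property, which is available precisely because $K$ is a function field over an algebraically closed field of characteristic zero; over number fields one would have to treat Archimedean places separately and pick up a modified constant (as in the original Hsia--Silverman formulation). Here the uniform bound $2\lambda_v(y,\infty)$ is attained with no loss.
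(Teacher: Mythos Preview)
Your proof is correct and follows the same direct-computation route as the paper, which simply defers to the number-field argument in Hsia--Silverman. The one genuine difference is that in Case~2 you invoke the ultrametric ``strongest wins'' identity to get $|y|_v = |x|_v$ exactly; this is a clean simplification available only because all places of $K$ are non-Archimedean, whereas the cited proof must also accommodate Archimedean places. So your argument is not a different approach, just a self-contained and slightly streamlined version of the same one.
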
\begin{proof}
 The proof works in the same way as the proof over number fields appearing in \cite[Lemma 3]{HS}.
 \end{proof}
 
  Now, let $\phi: \mathbb{P}^1 \rightarrow \mathbb{P}^1$ be a rational map of degree $d \geq 2$ defined over $K$.
 In this situation we let \begin{center}$\phi^{(n)}=\phi \circ ... \circ \phi$ \end{center} with $\phi^{(0)}=$Id. For simplicity $\phi^{(n)}=\phi^n$.

For a point $P \in \mathbb{P}^1$, the $\phi$-orbit of $P$ is defined as 
\begin{center}
 $\mathcal{O}_{\phi}(P)= \{ \phi^{(n)}(P) | n \geq 0 \}.$
 \end{center}
 The point $P$ is called \textit{preperiodic for $\phi$} if $\mathcal{O}_{\phi}(P)$ is finite. 
 
 We set $\text{Wander}_K(\phi)=\{P \in \mathbb{P}^1(K): P \text{ is not preperiodic for } \phi\}$.
 
 We recall that for $P=[x_0, x_1] \in \mathbb{P}^1(K)$ the height of $P$ is 
 \begin{center}
  $h(P)= \sum_{v \in M_K}  \log(\max \{ |x_1|_v, |x_1|_v \}$.
 \end{center} And using the definition of $\lambda_v$, we see that
 \begin{center}
  $h(P)= \sum_{v \in M_K} \lambda_v(P, \infty) + O(1)$.
 \end{center}
 
 For a polynomial $f= \sum a_i z^i$ and an absolute value $v \in M_K$, we define 
  $|f|_v=\max_i \{ |a_i|_v \}$  and \begin{center} $h(f)= \sum_{v \in M_K}  \log |f|_v$.
 \end{center}
 Given a rational function $\phi(z)= f(z)/g(z) \in K(z)$ of degree $d$ written in normalized form, let us write $f(z)=\sum_{i \leq d} a_i z^i, g(z)=\sum_{i \leq d} b_i z^i $ with $a_d$ and $b_d$ different from zero,
 and $f$ and $g$ relatively prime in $K[z]$. 
 
 For $v \in M_K$, we set $|\phi|_v= \max \{ |f|_v, |g|_v \}$, and then the height of $\phi$ is defined by
 \begin{center}
  $h(\phi):= \sum_{v \in M_K}  \log |\phi|_v$.
 \end{center}

 \begin{prop}\label{prop2.3}
  Let $\phi$ be a  rational function with $\deg \phi= d \geq 2$. Then for all $n \geq 1$, we have
  \begin{center}
   $h(\phi^n) \leq \left(\dfrac{d^n-1}{d-1}\right)h(\mathcal{F}) + d^2\left(\dfrac{d^{n-1}-1}{d-1}\right)\log 8$.
  \end{center}
 \end{prop}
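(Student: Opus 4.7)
The strategy I would follow is induction on $n$ combined with a per-step composition bound for heights. The base case $n=1$ is trivial. For the inductive step, the target is an inequality of the form
\begin{equation*}
h(\phi^{n+1}) \;\leq\; d\cdot h(\phi^n) + h(\phi) + d^{2}\log 8,
\end{equation*}
and then observe that iterating this, starting from $h(\phi^{1})=h(\phi)$, produces exactly the closed form of the statement via the geometric series $\sum_{k=0}^{n-1}d^{k}=(d^{n}-1)/(d-1)$. So the whole task reduces to proving the one-step composition estimate.

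To establish the one-step bound, write $\phi=f/g$ in normalized form with $f(z)=\sum_{i=0}^{d}a_{i}z^{i}$ and $g(z)=\sum_{i=0}^{d}b_{i}z^{i}$, and $\phi^{n}=f_{n}/g_{n}$ in normalized form. Then
\begin{equation*}
\phi^{n+1}=\phi\circ\phi^{n}=\frac{\sum_{i=0}^{d}a_{i}\,f_{n}^{\,i}\,g_{n}^{\,d-i}}{\sum_{i=0}^{d}b_{i}\,f_{n}^{\,i}\,g_{n}^{\,d-i}}.
\end{equation*}
At each place $v\in M_K$ I would use the Gauss lemma (available in full force since every place of $K$ is non-archimedean) to get $|f_{n}^{\,i}g_{n}^{\,d-i}|_{v}=|f_{n}|_{v}^{i}|g_{n}|_{v}^{d-i}\leq |\phi^{n}|_{v}^{d}$, and then the ultrametric inequality applied to the sum of $d+1$ monomials bounds the numerator and denominator locally by $|\phi|_{v}\cdot|\phi^{n}|_{v}^{d}$, up to a small multiplicative overhead. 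Summing over $v\in M_K$ turns this into the desired additive recurrence on heights.

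The only subtlety I expect is normalization: after composition the polynomials in the numerator and denominator may share a common polynomial factor $H$, and what we actually want to bound is $h$ of the reduced expression. Because the Gauss norm is multiplicative at non-archimedean places, the height of the unreduced form exceeds that of the reduced form by $\sum_{v}\log|H|_{v}\geq 0$, so any cancellation only improves the bound. The constant $d^{2}\log 8$ is kept in place to absorb the combinatorial overhead from the $d+1$ monomials, from resultant-type estimates used to certify the normalization step, and from the passage between $|\phi|_{v}$ and the coefficient max. The main obstacle is precisely this bookkeeping of the constant so that it is uniform in $n$ and in $\phi$; modulo this, the argument is a direct function-field adaptation of the Silverman/Hsia--Silverman composition estimate for heights of iterates, with the non-archimedean hypothesis actually simplifying several steps that were delicate in the number-field setting.
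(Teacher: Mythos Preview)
Your approach is correct and is essentially the argument behind the cited reference: the paper itself does not prove this proposition but simply invokes \cite[Proposition~5(d)]{HS}, whose proof is exactly the induction on $n$ via the one-step composition bound $h(\phi\circ\psi)\le (\deg\phi)h(\psi)+h(\phi)+O_d(1)$ that you describe. One remark worth making explicit: since every place of a function field $K$ as in this paper is non-archimedean, the Gauss lemma and the ultrametric inequality hold without any additive slack, so your recurrence actually yields $h(\phi^{n+1})\le d\,h(\phi^n)+h(\phi)$ with no $\log 8$ term at all; the stated bound with $d^{2}\log 8$ is inherited from the number-field statement in \cite{HS} and is simply weaker than what your argument gives here.
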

 \begin{proof}
\cite[Proposition 5 (d)]{HS}.
  \end{proof}
  
  \begin{lemma}\label{lem2.4} For a rational map $\phi: \mathbb{P}^1 \rightarrow \mathbb{P}^1$ of degree $d \geq 2$ defined over $K$ and $L= \mathcal{O}_{\mathbb{P}^1}(1)$, it is true that
  
  (a) $|h(\phi (P))- dh(P)| \leq c_1h(\phi) + c_2$.
  
  (b) $\hat{h}_{\phi}(P)= \lim_n h(\phi^{(n)}(P))/d^n$.
  
  (c) $|\hat{h}_{\phi}(P) - h(P)| \leq c_3h(\phi) + c_4$.
  
  Where $c_1, c_2, c_3$ and $c_4$ above depend only on $d$.
  \end{lemma}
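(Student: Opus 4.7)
The plan is to follow the classical Call--Silverman construction of canonical heights, making only minor adjustments to the function field setting (which in characteristic $0$ behaves much like the number field setting, since the product formula holds and the places behave as expected).

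The starting point is part (a), from which (b) and (c) follow by the standard telescoping argument. I write $\phi = [F(X,Y) : G(X,Y)]$ with $F,G \in K[X,Y]$ homogeneous of degree $d$ and coprime, and I think of $h(\phi)$ as the height of the coefficient vector of $F,G$. For the upper bound $h(\phi(P)) \leq d\, h(P) + c_1 h(\phi) + c_2$, for each place $v$ I apply the triangle inequality (or ultrametric inequality) to $F(x_0,x_1)$ and $G(x_0,x_1)$ to get $\max(|F|_v,|G|_v) \leq C_v \max(|x_0|_v,|x_1|_v)^d$, where $C_v$ is controlled by $|\phi|_v$ with a factor that only depends on $d$ and is nontrivial only at archimedean places (of which there are none here, so that term is clean; for the function field case every place is ultrametric). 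Summing over $v$ gives the upper bound with constants depending only on $d$.

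For the matching lower bound I invoke the resultant identity: since $F,G$ are coprime homogeneous forms, there exist polynomials $A,B,C,D \in K[X,Y]$ of degree $d-1$ whose coefficients are polynomials in the coefficients of $F,G$ divided by $\operatorname{Res}(F,G)$, such that
\begin{equation*}
\operatorname{Res}(F,G)\, X^{2d-1} = A F + B G, \qquad \operatorname{Res}(F,G)\, Y^{2d-1} = C F + D G.
\end{equation*}
Evaluating at $(x_0,x_1)$ and taking $v$-adic absolute values yields $|\operatorname{Res}(F,G)|_v \max(|x_0|_v,|x_1|_v)^{2d-1} \leq C'_v \max(|x_0|_v,|x_1|_v)^{d-1} \max(|F|_v,|G|_v)$, i.e. $\max(|x_0|_v,|x_1|_v)^{d} \leq C'_v |\operatorname{Res}(F,G)|_v^{-1} \max(|F(x_0,x_1)|_v,|G(x_0,x_1)|_v)$. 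Summing over $v$, the resultant contribution is killed by the product formula, and what remains is $d\,h(P) \leq h(\phi(P)) + c_1 h(\phi) + c_2$ with constants depending only on $d$. Combining both inequalities proves (a).

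For (b), from (a) I get $|h(\phi^{n+1}(P)) - d\, h(\phi^n(P))| \leq c_1 h(\phi) + c_2$ for every $n$, so dividing by $d^{n+1}$ and telescoping shows that the sequence $\{h(\phi^n(P))/d^n\}$ is Cauchy, hence convergent; its limit is by definition $\hat h_\phi(P)$, matching the limit already recalled in the text. For (c), summing the geometric tail $\sum_{n \geq 0} (c_1 h(\phi)+c_2)/d^{n+1}$ gives $|\hat h_\phi(P) - h(P)| \leq (c_1 h(\phi)+c_2)/(d-1)$, so taking $c_3 = c_1/(d-1)$ and $c_4 = c_2/(d-1)$ yields the desired bound. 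The only real obstacle is tracking the constants in (a), and in particular verifying that the resultant contribution indeed cancels via the product formula; once this is in hand, (b) and (c) are formal consequences. No step uses anything specific to number fields, so the argument carries over to our function field $K$ verbatim.
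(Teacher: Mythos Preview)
Your argument is correct and is exactly the approach the paper has in mind: the paper's proof simply cites \cite[Proposition~A]{BMZ} for part (a) (which is the resultant/Nullstellensatz argument you wrote out) and \cite[Theorem~3.20]{Silv2} for the telescoping that gives (b) and (c). You have essentially unpacked those references; the one imprecision is the phrase ``divided by $\operatorname{Res}(F,G)$''---in the displayed identity the polynomials $A,B,C,D$ have coefficients that are universal integral polynomials in the coefficients of $F,G$ (of degree bounded in terms of $d$), so their $v$-adic size is $\ll_d |\phi|_v^{O(d)}$, and this is what makes the $c_1 h(\phi)$ term appear after summing over $v$.
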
\begin{proof}
  This is stated in \cite[Proposition 6]{HS} over number fields. For (a), the same procedure given in \cite[Proposition A]{BMZ} works over the referred function fields. For (b) and (c), the proof of the existence of the limit defining the canonical height (\cite[Theorem 3.20]{Silv2}) together with (a) yields the desired.
  \end{proof}
  
  \begin{lemma}\cite[Theorem 1.6]{B}\label{lem2.5}
Let $\varphi(z) \in K(z)$ of degree at least $2$, and assume that $\varphi$ is not isotrivial, and $\hat{h}_\varphi$ is the canonical height associated with $\varphi$. Then there exists $\varepsilon>0$ ( depending on $K$ and $\varphi$) such that the set 
$$
\{ P \in \mathbb{P}^1(K): \hat{h}_\varphi(P) \leq \varepsilon\}
$$ is finite.
\end{lemma}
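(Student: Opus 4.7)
The plan is to reduce the gap statement to two separate inputs, both of which rely essentially on non-isotriviality. First I would record that for $\varphi\in K(z)$ of degree at least $2$, the equality $\hat{h}_\varphi(P)=0$ is equivalent to $P$ being preperiodic under $\varphi$. One direction is immediate: if $P$ is preperiodic, then the orbit $\{\varphi^{(n)}(P)\}$ has bounded naive height, and combining this with the functional equation $\hat{h}_\varphi(\varphi(P))=d\,\hat{h}_\varphi(P)$ (together with Lemma~\ref{lem2.4}(b)) forces $\hat{h}_\varphi(P)=0$. The converse is where non-isotriviality must be used: for isotrivial $\varphi$ one can have constant non-preperiodic $P\in\mathbb{P}^1(k)\subset\mathbb{P}^1(K)$ with vanishing canonical height, where $k$ is the constant field, so the implication fails in general.

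Next I would invoke the finiteness of $\Preper(\varphi,K)$ for non-isotrivial $\varphi$, due to Benedetto. Combined with the previous step, this says that the level set $\{P\in\mathbb{P}^1(K):\hat{h}_\varphi(P)=0\}$ is already finite, so the lemma amounts to a quantitative gap strictly above zero.

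To produce a positive $\varepsilon$, I would argue by contradiction: suppose there are pairwise distinct non-preperiodic $P_n\in\mathbb{P}^1(K)$ with $\hat{h}_\varphi(P_n)\to 0$. By Lemma~\ref{lem2.4}(c) the naive heights $h(P_n)$ are bounded, but over a function field this does not by itself imply finiteness, so the dynamics must be exploited. The additional ingredient is an adelic equidistribution theorem for sequences of small points on the Berkovich $\mathbb{P}^1$ (Baker--Rumely, Chambert-Loir, Favre--Rivera-Letelier in the geometric setting): such a sequence has its Galois orbits equidistributing with respect to the canonical measures $\mu_{\varphi,v}$ at every place $v$ of $K$. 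Non-isotriviality is precisely what prevents these measures from being concentrated on $\mathbb{P}^1(K_v)$ for every $v$, and this yields the required contradiction.

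The main obstacle is this last step, which requires both the Berkovich equidistribution machinery over function fields and a clean translation of non-isotriviality into non-degeneracy of $\mu_{\varphi,v}$ at some place of bad reduction. This is the substance of Baker's Theorem~1.6, and is why the lemma is imported as a black box rather than re-derived here; for the present paper only the qualitative finiteness will be needed, used together with Lemma~\ref{lem2.4}(c) to control $h(P)$ on the set of small canonical height.
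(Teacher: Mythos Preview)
The paper gives no proof of this lemma: it is stated with the citation \cite[Theorem 1.6]{B} and used as a black box throughout. Your proposal is not so much a proof attempt as a correct sketch of the architecture of Baker's original argument (the equivalence $\hat{h}_\varphi(P)=0\Leftrightarrow P$ preperiodic under non-isotriviality, Benedetto's finiteness of preperiodic points, and the Berkovich equidistribution input that upgrades this to a positive gap), and you yourself conclude that the result is imported rather than re-derived. There is therefore nothing to compare against; your final paragraph already matches exactly what the paper does.
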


  \subsection{A distance estimate and an effective version of Roth's Theorem}
We will state three results that will be needed to prove our main theorems. The first one is a result  that gives explicit estimates for the dependence on local heights of points and function.
 
 Let us recall that, for a rational function $f(z)$, $P \neq \infty$ and $f(P) \neq \infty$, the \textit{ramification index} of $f$ at $P$ is defined as the order of $P$ as a zero of the rational function $f(z) - f(P)$, i.e., 
 \begin{center}
  $e_P(f)=$ ord$_P(f(z) - f(P))$.
 \end{center} If $P= \infty$, or $f(P)=\infty$, we change coordinates through a linear fractional transformation $L$, such that $L^{-1}(P)=\beta \neq \infty, L^{-1}(f(L(\beta))) \neq \infty$, and define $e_P(f)=e_{\beta}(L^{-1} \circ f \circ L)$. It will not depend on the choice of $L$. We say that $f$ is \textit{totally ramified} at $P$ if $e_P(f)=\deg f$. It is also an exercise to show that 
 \begin{center}
 $e_P(g \circ f)=e_P(f) e_g(f(P))$
 \end{center} for every $f,g$ rational functions and $P \in K \cup \{ \infty \}$.

 The result is as follows.
 
 \begin{lemma}\label{lem2.6}
  Let $\psi \in K(z)$ be a nontrivial rational function, let $S \subset M_K$ be a finite set of absolute values on $K$, each extended in some way to $\bar{K}$, and let $A, P \in \mathbb{P}^1(K)$. Then 
 \begin{align*}
  \sum_{v \in S} \max\limits_{A^{\prime} \in \psi^{-1}(A)} e_{A^{\prime}}(\psi) & \lambda_v (P, A^{\prime}) \geq \\&
  \sum_{v \in S}  \lambda_v (\psi(P),A) - O(h(A) + h(\psi) +1),
 \end{align*} where the implied constant depends only on the degree of the map $\psi$.
\end{lemma}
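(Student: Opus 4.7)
The plan is to adapt the number field proof of the analogous statement in Hsia--Silverman, which relies only on the product formula, the chordal distance estimate of Lemma~\ref{lem2.2}, and standard comparisons between the height of a polynomial and the heights of its roots --- all of which are available in our function field setting.

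First I would reduce to the case where $A \in K$ is finite and no preimage of $A$ under $\psi$ is the point at infinity. This is arranged by pre- and post-composing $\psi$ with bounded-height Möbius transformations, and the resulting perturbations of $\lambda_v$, of $h(\psi)$, and of $h(A)$ are all $O(h(A) + h(\psi) + 1)$, hence absorbed into the error term. Writing $\psi = F/G$ in normalized coprime form with $F,G \in K[z]$, the polynomial $F(z) - A\, G(z)$ has degree exactly $d = \deg \psi$ and factors over $\overline{K}$ as
\begin{equation*}
F(z) - A\, G(z) \;=\; c \prod_{A' \in \psi^{-1}(A)} (z - A')^{e_{A'}(\psi)},
\end{equation*}
with Mahler-type estimates bounding $h(c)$ and each $h(A')$ by $O(h(\psi) + h(A) + 1)$.

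Next I fix $v \in S$ and choose $A'_v \in \psi^{-1}(A)$ maximizing $\lambda_v(P, A')$. Taking $|\cdot|_v$ in the factorization gives an expression for $|\psi(P) - A|_v$ as $|c|_v \cdot |G(P)|_v^{-1} \cdot \prod_{A'} |P - A'|_v^{e_{A'}(\psi)}$. For each $A'' \neq A'_v$, the identity $P - A'' = (P - A'_v) + (A'_v - A'')$ together with the maximality of $\lambda_v(P, A'_v)$ lets one bound $|P - A''|_v$ from below by a quantity depending only on $|A'_v - A''|_v$, so that the $A'_v$-factor dominates. Taking $-\log$ and converting to $\lambda_v$ via Lemma~\ref{lem2.2} produces the local estimate
\begin{equation*}
\lambda_v(\psi(P), A) \;\leq\; e_{A'_v}(\psi)\, \lambda_v(P, A'_v) + \mathrm{err}_v,
\end{equation*}
where $\mathrm{err}_v$ involves only local norms of $c$, of $G(P)$, of the pairwise differences $A' - A''$, and of chordal terms involving $\infty$.

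Finally I would sum over $v \in S$: by the product formula $\sum_{v \in M_K} \log|x|_v = 0$, the sum over $v \in S$ of each term appearing in $\mathrm{err}_v$ is bounded by the global height of the corresponding quantity, giving $\sum_{v \in S} \mathrm{err}_v = O(h(\psi) + h(A) + 1)$. Combined with the trivial bound $e_{A'_v}(\psi)\, \lambda_v(P, A'_v) \leq \max_{A' \in \psi^{-1}(A)} e_{A'}(\psi)\, \lambda_v(P, A')$, this yields the claimed inequality. The main technical obstacle is organizing the error terms in a uniform, chart-independent way when $P$ happens to be $v$-adically close to several preimages of $A$ simultaneously, or to $\infty$; the chordal metric $\rho_v$ is precisely what encapsulates these cases, allowing the right-hand side of the final bound to collapse to the single quantity $\max_{A'} e_{A'}(\psi)\, \lambda_v(P, A')$.
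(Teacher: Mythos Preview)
Your route differs from the paper's: the paper does not reprove anything, but simply observes that the number-field statement is \cite[Proposition 7]{HS}, whose proof rests on (i) the height comparison of Lemma~\ref{lem2.4} and (ii) Silverman's distribution relation via an arithmetic inverse function theorem, and then points out that (i) holds over function fields by Lemma~\ref{lem2.4} while (ii) is supplied over arbitrary global fields by the corrected treatment in \cite[Sections 4--5]{MS}. You instead reconstruct the needed special case of the distribution relation on $\mathbb{P}^1$ from scratch via the factorization of $F-AG$, which is more elementary and self-contained.

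Your sketch, however, has one genuine slip in the bookkeeping of the $P$-dependent terms. You list $\log|G(P)|_v$ among the constituents of $\mathrm{err}_v$ and then assert that each such constituent, summed over $v\in S$, is bounded by the global height of the corresponding quantity. But $h(G(P))$ is of order $d\cdot h(P)$, not $O(h(A)+h(\psi)+1)$, so this step as written does not close. What really happens is a cancellation: passing from $-\log|\psi(P)-A|_v$ to $\lambda_v(\psi(P),A)$ replaces $+\log|G(P)|_v$ by $\log\max(|F(P)|_v,|G(P)|_v)=d\log\max(|P|_v,1)+\delta_v$ (resultant estimate), and this $+d\log\max(|P|_v,1)$ cancels exactly against the $-d\log\max(|P|_v,1)$ produced when \emph{every} factor $-\log|P-A'|_v$ is converted to $\lambda_v(P,A')$. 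If instead you first discard the $A''\neq A'_v$ factors via $|P-A''|_v\geq|A'_v-A''|_v$ and only afterwards convert the single surviving factor, you are left with an uncancelled $(d-e_{A'_v})\log\max(|P|_v,1)$, which is unbounded in $P$. The fix is to reverse the order: first establish the exact local distribution relation
\[
\lambda_v(\psi(P),A)=\sum_{A'\in\psi^{-1}(A)} e_{A'}(\psi)\,\lambda_v(P,A')+\mathrm{err}'_v,\qquad \sum_{v\in S}|\mathrm{err}'_v|=O(h(A)+h(\psi)+1),
\]
and only then use the ultrametric for $\rho_v$ (valid at every place of a function field) to get $\lambda_v(P,A'')\leq\lambda_v(A'_v,A'')$ for $A''\neq A'_v$, whose contribution sums to $O\bigl(\sum_{A'}h(A')\bigr)=O(h(A)+h(\psi)+1)$. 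With that reordering your argument goes through.
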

\begin{proof} This is stated in \cite[Proposition 7]{HS} over number fields. Its proof uses a higher dimensional version of Lemma \ref{lem2.4}(b) applied for maps in dimension 1, thus Lemma \ref{lem2.4} is enough and works over function fields. More importantly, the proof uses strong distribution value theorems related with inverse function theorem due to Silverman. It was found an error in these proofs, which were accordingly corrected in \cite[Sections 4 and 5]{MS}, that work over global fields.
\end{proof}

  \begin{lemma} \cite[Lemma 9]{HS}\label{lem2.7} Fix an integer $d \geq 2$. Then there exist two positive constants $\kappa_1 >0$ and $ 0 < \kappa_2<1$ depending only on $d$ such that for all rational functions $\phi:\mathbb{P}^1(K) \rightarrow \mathbb{P}^1(K)$, all points $Q$ that are not exceptional for $\phi$, all integers $m \geq 1$, and all $P \in \phi^{-m}(Q)$, we have 
   that \begin{center}
             $e_P(\phi^m) \leq \kappa_1 (\kappa_2d)^m$ for any $m \geq 0$.
            \end{center}
\end{lemma}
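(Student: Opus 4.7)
The strategy is to reduce the estimate on $e_P(\phi^m)$ to a combinatorial bound on how often the forward orbit of $P$ visits the critical locus of $\phi$. First I would apply the chain rule for ramification noted earlier in the paper,
\[
e_P(\phi^m) \;=\; \prod_{i=0}^{m-1} e_{\phi^i(P)}(\phi),
\]
observing that each factor is at most $d$ and equals $1$ unless $\phi^i(P)$ lies in the critical locus $C\subset\mathbb{P}^1$ of $\phi$. By Riemann--Hurwitz applied to $\phi$, one has $\#C\le 2d-2$, and the subset $T\subset C$ of totally ramified points (where $e_a(\phi)=d$) satisfies $\#T\le 2$ since each such point absorbs $d-1$ of the Riemann--Hurwitz total $2d-2$. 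Writing $k(P,m) := \#\{0\le i<m : \phi^i(P)\in C\}$, the chain-rule identity gives
\[
e_P(\phi^m) \;\le\; d^{k(P,m)},
\]
so the lemma follows once one proves $k(P,m)\le (1-\delta)m + O(1)$ for some $\delta=\delta(d)>0$, with the $O(1)$ absorbed into $\kappa_1$ and $\kappa_2 := d^{-\delta}$.

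Next, I would study how the orbit of $P$ can meet $C$. A repeated visit to the same critical point $c$ at times $j_1<j_2$ forces $\phi^{j_2-j_1}(c)=c$, so $c$ must be periodic and all its visit times lie in a single arithmetic progression. Distributing the $k(P,m)$ visits among the $\le 2d-2$ critical points by pigeonhole, and separating the hit critical points into transient ones (visited once) and periodic ones (visited along an arithmetic progression), one obtains a universal density bound on $k(P,m)$. The extremal case is when the orbit runs inside a cycle of totally ramified points in $T$, where each step would contribute the full factor $d$. This is exactly where the hypothesis that $Q$ is non-exceptional intervenes: a cycle inside $T$ has backward orbit equal to itself and thus consists of exceptional points of $\phi$, so the orbit of $P$ cannot terminate at the non-exceptional $Q$ while running entirely inside such a cycle. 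This forces at least a positive proportion of non-critical steps and yields the strict inequality $\delta>0$, producing constants $\kappa_1,\kappa_2$ depending only on $d$.

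The main obstacle I foresee is extracting the \emph{uniform} linear-in-$m$ slack $k(P,m)\le (1-\delta)m + O(1)$ with $\delta>0$ depending only on $d$, independent of $\phi$, $P$ and $Q$. All the geometric inputs used in this scheme (chain rule for ramification, Riemann--Hurwitz, finiteness of the totally ramified locus, and the bound $\#\mathrm{Exc}(\phi)\le 2$) are purely characteristic-zero facts, so the passage from number fields to function fields of characteristic zero is automatic and no arithmetic content is needed; the substantive work lies in the combinatorics of orbit visits to critical points, and any statement of the form ``repeatedly hit critical points are periodic and hence tightly constrained by Riemann--Hurwitz'' should suffice to close the estimate with the quoted constants.
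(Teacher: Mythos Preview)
Your reduction contains a genuine gap. The claim $k(P,m)\le (1-\delta)m+O(1)$, which you single out as the main obstacle, is in fact false. For any $d\ge 3$ take for instance the polynomial $\phi(z)=z^2(z^{d-2}+1)$: then $0$ is a fixed critical point with $e_0(\phi)=2<d$, so $0$ is not exceptional, yet with $Q=P=0$ one has $\phi^i(P)=0\in C$ for every $i$ and hence $k(P,m)=m$ for all $m$. The crude estimate $e_P(\phi^m)\le d^{\,k(P,m)}$ discards precisely the information you need: what controls the growth is not how often the orbit meets the critical locus $C$, but how often it meets the \emph{totally ramified} locus $T$, since away from $T$ each factor in the chain-rule product is already at most $d-1$.

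The repair is either to run your combinatorial argument on $T$ rather than $C$ (and then your observation that a cycle contained in $T$ consists of exceptional points is exactly the right input), or, more cleanly, to argue as in Silverman's \emph{Arithmetic of Dynamical Systems}, to which the paper simply defers: since $\#T\le 2$ by Riemann--Hurwitz, three consecutive totally ramified points $R,\phi(R),\phi^2(R)$ must contain a repetition and hence lie in a totally ramified cycle, forcing $\phi^3(R)$ to be exceptional. Contrapositively, $e_{R}(\phi^3)\le d^2(d-1)$ whenever $\phi^3(R)$ is non-exceptional. As every point in the backward orbit of a non-exceptional $Q$ is itself non-exceptional, iterating this in blocks of three gives
\[
e_P(\phi^m)\ \le\ d^2\bigl(d^2(d-1)\bigr)^{\lfloor m/3\rfloor},
\]
which yields $\kappa_2=\bigl((d-1)/d\bigr)^{1/3}$ and $\kappa_1=d^2$. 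Your closing remark that the passage to characteristic-zero function fields is automatic because only Riemann--Hurwitz is used is correct and is exactly the paper's justification.
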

\begin{proof}
 The proof of (\cite[Lemma 3.52]{Silv2}) works here, since the Riemann-Hurwitz formula works for this context as well.

\end{proof}

The third result is the following effective version of Roth's theorem  over function fields due to Wang.

\begin{lemma} \cite{W1}\label{lem2.8}
 Let $S$ be a finite subset of $M_K$. We assume that each place in $S$ is extended to $\bar{K}$ in some fashion. Assume that for each $v \in S$, we have an element $\beta_v \in \bar{K}$. Then, for any $\mu >2$, the elements $x \in K$ satisfying 
$$ \sum_{v \in S}  \log^+ |x - \beta_v|_v^{-1} \geq \mu h(x)$$ have their heights bounded by an effective constant depending on $\mu,|S|,$ the genus of $K$, and the elements $\beta_v$.
\end{lemma}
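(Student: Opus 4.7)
The statement is Wang's effective Roth-type theorem over function fields \cite{W1}, so I can only outline the general strategy one would follow; tracking the explicit constants is precisely the technical content of Wang's paper. The high-level plan mirrors the classical Thue--Siegel--Roth scheme, which in the function-field setting is well known to admit effective (indeed explicit) versions, thanks to Riemann--Roch and Dyson's lemma being available in effective form.

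The plan is to argue by contradiction. Assume there exist $x \in K$ of arbitrarily large height satisfying the displayed inequality, and choose $m$ such solutions $x_1,\dots,x_m$ whose heights grow geometrically with ratio $R=R(\mu)$, where $m$ depends on $\mu$ and $|S|$. By replacing $K$ by a finite extension of degree bounded in terms of $\max_v [K(\beta_v):K]$ and averaging over Galois conjugates, one may assume each $\beta_v$ is $K$-rational, with only a controlled inflation of the eventual bound. Using effective Siegel's lemma over $K$, which follows from Riemann--Roch on the underlying smooth projective curve and is sensitive only to the genus $g(K)$, construct a nonzero auxiliary polynomial
\[
P(X_1,\dots,X_m)\in K[X_1,\dots,X_m]
\]
of prescribed multidegree $(d_1,\dots,d_m)$, chosen so that $d_i\, h(x_i)$ is roughly constant in $i$, whose weighted index of vanishing at each diagonal target $(\beta_v,\dots,\beta_v)$ is at least close to $m/2$, and whose coefficients have explicitly bounded logarithmic height in terms of the $\beta_v$ and $g(K)$.

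The hypothesis $\sum_{v\in S}\log^+|x_i-\beta_v|_v^{-1}\geq \mu\, h(x_i)$ then forces every low-index derivative $D^{\mathbf{j}}P$ to take $v$-adically small values at the point $(x_1,\dots,x_m)$ for each $v\in S$; combined over $v\in S$ and summed across the derivative indices, this produces a strong upper bound on $\sum_v \log^+ |D^{\mathbf{j}}P(x_1,\dots,x_m)|_v^{-1}$. The crucial non-vanishing input is Dyson's lemma for function fields, which guarantees that for some $\mathbf{j}$ of small weighted index one has $D^{\mathbf{j}}P(x_1,\dots,x_m)\neq 0$. Applying the product formula to this nonzero element of $K$ and comparing with the small-value estimates at the places of $S$ yields an inequality involving $\mu$ and the heights $h(x_i)$ which, when $\mu>2$ and $R$ is chosen sufficiently large, can only be satisfied if each $h(x_i)$ is itself bounded by an explicit constant in $\mu$, $|S|$, $g(K)$, and the $\beta_v$.

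The main obstacle is quantitative: making the effective Siegel construction, the combinatorial choice of derivative indices, and the effective Dyson lemma interlock with all constants tracked simultaneously and with explicit dependence on the approximation data $\beta_v$. This technical bookkeeping is precisely what Wang establishes in \cite{W1}, which is why one cites the result rather than reproducing the proof here.
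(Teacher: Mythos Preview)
Your sketch of the Thue--Siegel--Roth strategy adapted to function fields is a reasonable outline of what lies behind Wang's theorem, but the paper itself does not prove this lemma at all: it is stated with the citation \cite{W1} and no proof is given, as the result is simply imported as a black box. So your final paragraph is the entire content of the paper's treatment, and the preceding sketch, while informative, goes well beyond what the paper does.
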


\subsection{A bound for the number of quasiintegral points in an orbit}
In this section, we show explicit bounds for the number of $S$-integral points in a given orbit of a wandering point for a dynamical system of rational functions
extending previous work by Hsia and Silverman \cite{HS}.

The next quantitative theorem generalizes Theorem 11 of Hsia and Silverman \cite{HS} to function fields of zero characteristic. The definitions and strategy of the proof are inspired by their ideas with diophantine approximation.
\begin{theorem}\label{th2.9}
 Let $\phi \in K(z)$ be a non-isotrivial rational function of respective degree $d\geq 2$, and $P \in \mathbb{P}^1(K)$ not preperiodic for $\phi$. Fix $A \in \mathbb{P}^1(K)$ which is not an exceptional point of $\phi$. For any finite set of places $S \subset M_K$ and any constant $1 \geq \epsilon >0$, define a set of nonnegative integers by
\begin{center}
 $\Gamma_{\phi,S}(A, P, \epsilon) := \{ \phi^{(n)}(P) : \sum_{v \in S}  \lambda_v (\phi^n(P), A) \geq \epsilon \hat{h}_{\phi}(\phi^n (P)) \}$.
 \end{center}
 (a) There exist effective constants
 \begin{center}
  $\gamma_1= \gamma_1(\phi, \epsilon,|S|, K,A)$ and $\gamma_2= \gamma_2(\phi, \epsilon,|S|, K,A)$
 \end{center} such that
 \begin{center}
  $ \left\{\phi^{(n)}(P) \in \Gamma_{\phi,S}(A, P, \epsilon) : n >  \gamma_1 + \log_{d}^+ \left(\dfrac{\hat{h}_{\phi}(A)+h(\phi)}{\hat{h}_{\phi}(P)}\right)\right\} $ 
 \end{center}has bounded height from above by $\gamma_2$
 \newline (b) If $P$ is not $\phi$-preperiodic,  there is an effective constant $\gamma_3(\phi, \epsilon,|S|, K,A)$ that is independent of $P$ such that
\begin{center}
 $\displaystyle\max_{P} \{ n \geq 0 : \phi^{(n)}(P) \in  \Gamma_{\phi,S}(A, P, \epsilon)\} \leq \gamma_3+ \log_{d}^+ \left(\dfrac{h(\phi)}{\inf_{\hat{h}_\phi(P)>0}\hat{h}_{\phi}(P)}\right)$.
\end{center}

\end{theorem}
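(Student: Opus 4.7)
The plan is to adapt the diophantine-approximation argument of Hsia and Silverman \cite[Theorem 11]{HS} to the function-field setting, substituting Wang's effective Roth theorem (Lemma \ref{lem2.8}) for the classical one and using Baker's finiteness of small canonical heights (Lemma \ref{lem2.5}) to handle the $P$-dependence in part (b). Throughout, set $Q_j := \phi^{(j)}(P)$, so that $\hat h_\phi(Q_n)=d^n\hat h_\phi(P)$.

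First I would fix $n$ with $Q_n \in \Gamma_{\phi,S}(A,P,\epsilon)$ and choose an integer $m\in\{1,\dots,n-1\}$ depending only on $\epsilon,d,\kappa_1,\kappa_2$ and large enough that $\kappa_1\kappa_2^m \leq \epsilon/6$, which is possible since $\kappa_2<1$ in Lemma \ref{lem2.7}. Applying Lemma \ref{lem2.6} with $\psi=\phi^m$ and $P$ replaced by $Q_{n-m}$ gives
$$\sum_{v\in S}\max_{A'\in\phi^{-m}(A)} e_{A'}(\phi^m)\,\lambda_v(Q_{n-m},A') \;\geq\; \sum_{v\in S}\lambda_v(Q_n,A) - O\bigl(h(A)+h(\phi^m)\bigr),$$
and the right-hand side is at least $\epsilon d^n\hat h_\phi(P) - O\bigl(h(A)+d^m h(\phi)\bigr)$ by the defining inequality of $\Gamma_{\phi,S}$, Lemma \ref{lem2.4}(c), and Proposition \ref{prop2.3}. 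For each $v\in S$ pick $A_v^\ast\in\phi^{-m}(A)$ achieving the $v$-th maximum; bounding $e_{A_v^\ast}(\phi^m)\leq\kappa_1(\kappa_2 d)^m$ uniformly by Lemma \ref{lem2.7} (valid because $A$ is not exceptional) and dividing through by $\kappa_1(\kappa_2 d)^m$ yields
$$\sum_{v\in S}\lambda_v(Q_{n-m},A_v^\ast) \;\geq\; \frac{\epsilon d^{n-m}\hat h_\phi(P)}{\kappa_1\kappa_2^m} - O\bigl(\kappa_2^{-m}d^{-m}h(A)+\kappa_2^{-m}h(\phi)\bigr).$$

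Next I would convert each chordal distance $\lambda_v(Q_{n-m},A_v^\ast)$ into $\log^+|Q_{n-m}-A_v^\ast|_v^{-1}$ via Lemma \ref{lem2.2}, at the cost of an $O\bigl(\sum_v h(A_v^\ast)\bigr)$ error; since $\phi^m(A_v^\ast)=A$, Lemma \ref{lem2.4}(a) gives $h(A_v^\ast)\leq d^{-m}\bigl(h(A)+O(h(\phi^m))\bigr)$. Combined with the upper bound $h(Q_{n-m})\leq d^{n-m}\hat h_\phi(P)+O(h(\phi))$ from Lemma \ref{lem2.4}(c), the choice $\kappa_1\kappa_2^m\leq\epsilon/6$ then forces
$$\sum_{v\in S}\log^+|Q_{n-m}-A_v^\ast|_v^{-1} \;\geq\; 3\,h(Q_{n-m})$$
as soon as all error terms are absorbed, which happens precisely when $n>\gamma_1+\log_d^+\bigl((\hat h_\phi(A)+h(\phi))/\hat h_\phi(P)\bigr)$, the threshold in the theorem. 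Applying Wang's effective Roth theorem (Lemma \ref{lem2.8}) with $\mu=3$, $x=Q_{n-m}$, and $\beta_v:=A_v^\ast$ then bounds $h(Q_{n-m})$ by an effective constant depending on $\phi,\epsilon,|S|,K,A$; since $h(Q_n)\leq d^m h(Q_{n-m})+O(h(\phi^m))$ by Lemma \ref{lem2.4}(a) and $m$ is fixed, this produces the bound $\gamma_2$ of part (a).

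For part (b), Baker's theorem (Lemma \ref{lem2.5}) produces $\varepsilon_0=\varepsilon_0(\phi,K)>0$ with $\hat h_\phi(P)\geq\varepsilon_0$ for every non-preperiodic $P\in\mathbb{P}^1(K)$, so the $\log_d^+$ term in (a) is bounded by a $P$-independent constant; combining the upper bound $h(Q_n)\leq\gamma_2$ from (a) with the lower bound $h(Q_n)\geq d^n\varepsilon_0-O(h(\phi))$ from Lemma \ref{lem2.4}(c) then yields the stated effective bound on $n$. The main obstacle I expect is twofold: the flexibility in Wang's theorem allowing distinct $\beta_v$ at distinct places is crucial, since it removes the $d^m$ pigeonhole factor that would otherwise degrade the bound to nothing; and the bookkeeping of error terms, both between $\lambda_v$ and $\log^+|\cdot|_v^{-1}$ and between $h$ and $\hat h_\phi$, must be executed carefully enough to produce exactly the explicit threshold stated in the theorem.
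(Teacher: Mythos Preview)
Your proposal is correct and follows essentially the same route as the paper's proof: fix $m$ depending on $\epsilon$ via the ramification bound of Lemma~\ref{lem2.7}, apply the distance estimate of Lemma~\ref{lem2.6} with $\psi=\phi^m$, pick for each $v\in S$ a preimage $A_v^\ast\in\phi^{-m}(A)$ realizing the maximum, convert $\lambda_v$ to $\log^+|\cdot|_v^{-1}$ using Lemma~\ref{lem2.2}, and then split according to whether Wang's effective Roth inequality (Lemma~\ref{lem2.8}) holds for $x=Q_{n-m}$ --- one alternative bounds $h(Q_n)$ effectively, the other forces $n$ below the stated threshold; part~(b) then follows from Baker's Lemma~\ref{lem2.5} exactly as you indicate. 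The only cosmetic differences are that the paper takes $\mu=5/2$ rather than $\mu=3$, chooses $m$ via $\kappa_2^m\le\epsilon/5\kappa_1$, and carries out the $\lambda_v$-to-$\log$ conversion by explicitly partitioning $S$ into $S'$ and $S''$ according to the hypothesis of Lemma~\ref{lem2.2}, whereas you absorb this into an $O\bigl(\sum_v h(A_v^\ast)\bigr)$ error --- be sure when writing out the details that this step really does reduce to such an error, since Lemma~\ref{lem2.2} has a hypothesis that must be checked place by place.
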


\begin{proof}
 For simplicity, we write $\Gamma_{S}(\epsilon)$ instead of $\Gamma_{\Phi,S}(A, P, \epsilon)$.
Taking $\kappa_1$ and $\kappa_2 < 1$ the constants from Lemma \ref{lem2.7}, we choose $m \geq 1$ minimal such that
$\kappa^m_2 \leq \epsilon/5\kappa_1$. Then $\kappa_1, \kappa_2$ and $ m$ depend only on $d$ and on $\epsilon$. 

If $n \leq m$ for all $n$ such that $\phi^{n}(P) \in \Gamma_{S}(\epsilon)$, then
\begin{center}
 $\# \Gamma_{S}(\epsilon) \leq m \leq \dfrac{\log (5\kappa_1) + \log (\epsilon^{-1})}{\log (\kappa_2^{-1})} + 1$,
\end{center} which is in the desired form.
If there is an $n$ with $\phi^{n}(P) \in \Gamma_{S}(\epsilon)$ such that $n > m$, we fix $n$ for instance. Then by definition of $\Gamma_{S}(\epsilon)$ we have
\begin{equation}\label{eq2.1}
 \epsilon \hat{h}_{\phi}(\phi^n (P)) \leq  \sum_{v \in S}  \lambda_v (\phi^n(P), A).
\end{equation}
We can write $\phi^n= \phi^m \circ \phi^{n-m}$ and $\psi=\phi^m$.
\newline \newline For our chosen $m$, we denote
\begin{center}
 $\textbf{e}_m:= \max\limits_{A^{\prime} \in \psi^{-1}(A)} e_{A^{\prime}}(\psi)$.
\end{center} By Lemma \ref{lem2.7} and our choice of $m$, we notice that
\begin{center}
 $\textbf{e}_m \leq \kappa_1 (\kappa_2)^m \deg \psi \leq \epsilon \deg \psi/5$
\end{center}

Therefore, Lemma \ref{lem2.6} yields, for $Q \in \mathbb{P}^1(K)$, that
\begin{equation}\label{eq2.2}
 \sum_{v \in S}  \lambda_v ( \psi(Q),A) - O(h(A) + h(\psi) + 1)   \leq \textbf{e}_m \sum_{v\in S} \max\limits_{A^{\prime} \in \psi^{-1}(A)}  \lambda_v(Q, A^{\prime}).
\end{equation}
Gathering \ref{eq2.1} and \ref{eq2.2} with $Q:= \phi^{n-m}(P)$, we obtain that
\begin{center}
 $\epsilon \hat{h}_{\phi}(\phi^n (P)) \leq \textbf{e}_m \sum_{v\in S} \max\limits_{A^{\prime} \in \psi^{-1}(A)}  \lambda_v(\phi^{n-m}(P), A^{\prime}) + O(h(A) + h(\psi) + 1)$,
\end{center} where the involved constants depend only on the degree  $d^m$, $d$ and on $\epsilon$.

For each $v \in S$, we choose $A_v^{\prime} \in \psi^{-1}(A)$ such that
\begin{center}
 $ \lambda_v(\phi^{n-m}(P), A_v^{\prime})=\max\limits_{A^{\prime} \in \psi^{-1}(A)}  \lambda_v(\phi^{n-m}(P), A^{\prime})$,
\end{center} so that
\begin{center}
 $\epsilon \hat{h}_{\phi}(\phi^n (P)) \leq \textbf{e}_m \sum_{v\in S}  \lambda_v(\phi^{n-m}(P), A_v^{\prime}) + O(h(A) + h(\psi) + 1)$.
\end{center}
For instance, we can assume that $z(A^{\prime}) \neq \infty$ for all $A^{\prime} \in \psi^{-1}(A)$. If this is not the case, we use $z$ for some of the $A^{\prime}$ and $z^{-1}$ for the others.

Let $S^{\prime} \subset S$ be the set of places in $S$ defined by
\begin{center} 
 $S^{\prime}= \{ v \in S ; \lambda_v(\phi^{n-m}(P), A_v^{\prime}) > \lambda_v (A_v^{\prime}, \infty) \}$. 
\end{center}  

Set $S^{\prime \prime}:= S - S^{\prime}$. Applying Lemma \ref{lem2.2} to the places in $S^{\prime}$ and using the definition of $S^{\prime \prime}$ we find that
\begin{align*}
 \epsilon \hat{h}_{\phi}(\phi^n (P))& \leq \textbf{e}_m \displaystyle\sum\limits_{v\in S}
  \lambda_v(\phi^{n-m}(P), A_v^{\prime}) + O(h(A) + h(\psi) + 1)\\
&\leq \textbf{e}_m \displaystyle\sum\limits_{v\in S^{\prime}} (2 \lambda_v(A_v^{\prime},\infty)-\log|z(\phi^{n-m}(P))- z(A_v^{\prime})|+ \log l_v)\\
& \qquad \qquad 
 +\textbf{e}_m \displaystyle\sum\limits_{v\in S^{\prime \prime}} (\lambda_v(A_v^{\prime},\infty) + \log l_v) + O(h(A)+ h (\psi) + 1)\\
 &\leq \textbf{e}_m \displaystyle\sum\limits_{v\in S^{\prime}}  \log|z(\phi^{n-m}(P))- z(A_v^{\prime})|^{-1}\\
 & \qquad \qquad + \textbf{e}_m \displaystyle\sum\limits_{v\in S} (2\lambda_v(A_v^{\prime},\infty) + \log l_v) + O(h(A)+ h (\psi) + 1).
\end{align*}
Now using Lemma \ref{lem2.4} it can be checked that
\begin{align*}
\displaystyle\sum\limits_{v\in S}  \lambda_v(A_v^{\prime},\infty) & \leq \displaystyle\sum\limits_{A^{\prime} \in \psi^{-1}(A)} \displaystyle\sum\limits_{v\in S}  \lambda_v(A^{\prime},\infty)\\
& \leq \displaystyle\sum\limits_{A^{\prime} \in \psi^{-1}(A)} h(A^{\prime})\\ &\leq \displaystyle\sum\limits_{A^{\prime} \in \psi^{-1}(A)} \hat{h}_{\phi}(A^{\prime}) + O(h(\phi)+1)\\ 
 &=\displaystyle\sum\limits_{A^{\prime} \in \psi^{-1}(A)} (\deg \psi)^{-1} \hat{h}_{\phi}(\psi(A^{\prime})) + O(h(\phi)+1)\\
&\leq \displaystyle\sum\limits_{A^{\prime} \in \psi^{-1}(A)} (\deg \psi)^{-1} \hat{h}_{\phi}(A) + O(h(\phi)+1)\\
&\leq \hat{h}_{\phi}(A) + O(h(\phi) + 1). 
\end{align*}
The constants depend only on $m$ and $d$.
Also, from Proposition \ref{prop2.3} it follows that $h(\psi) = O(h(\phi) +1)$.

All the inequalities above together imply that
\begin{center}
 $\epsilon (\hat{h}_{\phi}(\phi^n (P)) \leq \textbf{e}_m (\displaystyle\sum\limits_{v\in S^{\prime}}  \log|z(\phi^{n-m}(P))- z(A_v^{\prime})|^{-1})+ O(\hat{h}_{\phi}(A) +h(\phi) + 1)$.
\end{center}
Let us set some definitions in order to apply Wang's Theorem. We define $\beta_v:= A_v^{\prime}$ and analyze the points
$x = \phi^{n-m}(P)$ for $\phi^n(P) \in \Gamma_S(\epsilon)$. Applying Lemma \ref{lem2.8} for the set of places $S^{\prime}$ and $\mu=5/2$, yields that there exist a constant $r_1$ depending only on $K, \phi, |S|$ and $\epsilon$ such that the set of $\phi^n(P) \in \Gamma_S(\epsilon)$ with $n>m$ can be written as a union
\begin{center}
 $\{ \phi^n(P) \in \Gamma_S(\epsilon) : n>m\}= T_1 \cup T_2 \cup T_3$
\end{center} such that \newline \newline
$  T_1$ has all its elements with height bounded from above by $r_1$, \newline \newline
$T_2 = \{\phi^n(P) \in \Gamma_S(\epsilon): n >m,  \displaystyle\sum\limits_{v\in S^{\prime}} d_v \log|z(\phi^{n-m}(P))- z(A_v^{\prime})|^{-1} \leq \frac{5}{2} h(\phi^{n-m}(P)) \},$ \newline \newline

We already have a bound for the height of $T_1$.
 We consider the set $T_2$. Again using Lemmas \ref{lem2.4} we derive
\begin{center}
 $h(\phi^{n-m}(P)) \leq \hat{h}_{\phi}(\phi^{n-m}(P)) + c_1 h(\phi) + c_2\newline \newline =d^{n-m}\hat{h}_{\phi}(P)+ c_1 h(\phi) + c_2$,
\end{center}
and then, for $n$ with $\phi^n(P) \in T_2$, using that $\textbf{e}_m \leq \epsilon \deg \psi /5$
\begin{align*}
 \epsilon \hat{h}_{\phi}(\phi^n(P))&=\epsilon d^n \hat{h}_{\Phi}(P)\\
 &\leq \textbf{e}_m (\displaystyle\sum\limits_{v\in S^{\prime}}  \log|z(\phi^{n-m}(P))- z(A_v^{\prime})|^{-1})+ c_{13}(\hat{h}_{\phi}(A) +h(\phi) + 1)\\
 &\leq (\epsilon \frac{\deg \psi}{5})\frac{5}{2}(d^{n-m})\hat{h}_{\phi}(P)+ c_{10}(\hat{h}_{\phi}(A)+ h(\phi) + 1)\\
  &= \frac{\epsilon}{2}d^n\hat{h}_{\phi}(P)+ c_{14}(\hat{h}_{\phi}(A) +h(\phi) + 1).
  \end{align*} Thus \newline \newline
  $\frac{\epsilon}{2}d^n\hat{h}_{\phi}(P) \leq c_{14}(\hat{h}_{\phi}(A) +h(\phi) + 1)$, which implies that
  \begin{center}
   $\frac{\epsilon}{2}d_1^n \hat{h}_{\phi}(P) \leq c_{14}(\hat{h}_{\phi}(A) +h(\phi) + 1)$,
  \end{center} equivalent to
  \begin{center}
   $n \leq c_{15}+ \log_{d}^+ \left(\dfrac{\hat{h}_{\phi}(A)+h(\phi)}{\hat{h}_{\phi}(P)}\right)$.
  \end{center}
  We observe that the set $r_1$ does not depend on the point, so the elements in $T_1$ have height  bounded  independently of $P$ by the constant $r_1$.
  We also note that the quantity
  \begin{center}
   $\hat{h}^{\text{min}}_{\phi,K} := \inf \{\hat{h}_{\phi}(P) : P \in \mathbb{P}^1(K)$ is not preperiodic for $\Phi  \}$
  \end{center} is strictly positive. This is a consequence of Lemma \ref{lem2.5}. For $\phi^n(P) \in T_1$, we can see from this and Lemma \ref{lem2.4} that $d^n\hat{h}_\phi(P)=\hat{h}_\phi(\phi^n(P))\leq r_1 +O(h(\phi)+1)$, and thus 
  $$
  n \leq \log_d \left(\dfrac{r_1+O(h(\phi)+1)}{\hat{h}^{\text{min}}_{\phi,K}} \right)
  $$ in this case.

Therefore, $\max\{ n : \phi^n(P)  \in (T_1 \cup T_2) \} $ can be bounded independently of $P$.
\end{proof}

\begin{corollary}\label{cor2.10}
 Let $S \subset M_K$ be a finite set of places, let $R_S$ be the ring os $S$-integers of $K$, and let $2\leq d$. Then, there is an effective constant
 $\gamma= \gamma(\phi,|S|, K)$ such that for all $ \phi \in K(z)$ non-isotrivial rational maps of  degrees $d\geq 2$ with $\phi^2 \notin \bar{K}[z]$, all $P \in \mathbb{P}^1(K)$ that are not preperiodic for $\phi$,
 the number of $S$-integers in the $\phi-$orbit of $P$ is bounded by
 \begin{center}
  $\# \{n \geq 1 ; z(\phi^n(P)) \in R_S \} \leq \gamma + \log^+_{d}\left(\dfrac{h(\phi)}{\hat{h}_{\phi}(P)}\right)$.
 \end{center}

\end{corollary}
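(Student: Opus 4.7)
The strategy is to reinterpret the condition $z(\phi^n(P)) \in R_S$ as membership of $\phi^n(P)$ in a set $\Gamma_{\phi,S}(\infty, P, \epsilon)$, and then invoke Theorem \ref{th2.9}(a) with $A = \infty$. Directly from the chordal metric definition one computes $\lambda_v([x,1], \infty) = \log \max\{|x|_v, 1\}$. Consequently, if $z(\phi^n(P)) \in R_S$ then $\lambda_v(\phi^n(P), \infty) = 0$ for every $v \notin S$, and therefore $\sum_{v \in S} \lambda_v(\phi^n(P), \infty) = h(\phi^n(P))$.

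By Lemma \ref{lem2.4}(c), $h(\phi^n(P)) \geq \hat{h}_\phi(\phi^n(P)) - c_3 h(\phi) - c_4 = d^n \hat{h}_\phi(P) - O(h(\phi)+1)$, with implicit constants depending only on $d$. Hence there is a threshold $n_0 = \log_d^+((h(\phi)+1)/\hat{h}_\phi(P)) + O(1)$ beyond which $h(\phi^n(P)) \geq \tfrac{1}{2}\hat{h}_\phi(\phi^n(P))$. Every $n \geq n_0$ with $z(\phi^n(P)) \in R_S$ therefore satisfies $\phi^n(P) \in \Gamma_{\phi,S}(\infty, P, 1/2)$, while the number of indices $n < n_0$ is trivially at most $n_0$.

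To apply Theorem \ref{th2.9}(a) with $A = \infty$ and $\epsilon = 1/2$, one needs $\infty$ to be non-exceptional for $\phi$. The classification of exceptional points on $\mathbb{P}^1$ for rational maps of degree $\geq 2$ shows that if $\infty$ lies in the exceptional set of $\phi$, then either $\phi^{-1}(\infty) = \{\infty\}$ (forcing $\phi \in \bar{K}[z]$) or $\phi$ swaps $\infty$ with another exceptional point (forcing $\phi^2$ to be totally ramified at $\infty$, hence in $\bar{K}[z]$); either alternative contradicts $\phi^2 \notin \bar{K}[z]$. Theorem \ref{th2.9}(a) then guarantees that for every $n > \gamma_1 + \log_d^+((\hat{h}_\phi(\infty) + h(\phi))/\hat{h}_\phi(P))$ satisfying $\phi^n(P) \in \Gamma_{\phi,S}(\infty, P, 1/2)$, one has $h(\phi^n(P)) \leq \gamma_2$. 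Combining this with $d^n \hat{h}_\phi(P) \leq h(\phi^n(P)) + O(h(\phi)+1)$ yields $n \leq \log_d^+(C_\phi(h(\phi)+1)/\hat{h}_\phi(P))$ for a constant $C_\phi$ depending only on $\phi$.

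Collecting all pieces and using $\#\{n \geq 1 : z(\phi^n(P)) \in R_S\} \leq n_{\max}$, the bound takes the shape $\gamma + \log_d^+(h(\phi)/\hat{h}_\phi(P))$ after absorbing $\phi$-dependent constants into a single $\gamma = \gamma(\phi, |S|, K)$, and using $\log_d^+((h(\phi)+C)/\hat{h}_\phi(P)) \leq \log_d^+(h(\phi)/\hat{h}_\phi(P)) + O(1)$ modulo $\phi$-dependent constants. The main technical obstacle is this bookkeeping: reconciling the $\hat{h}_\phi(\infty)$, $\gamma_1, \gamma_2$, and the constants from Lemma \ref{lem2.4} so that the final dependence matches the stated shape. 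The structural verification that $\phi^2 \notin \bar{K}[z]$ rules out $\infty$ being exceptional is standard, and no tool beyond Theorem \ref{th2.9} and Lemmas \ref{lem2.4}, \ref{lem2.5} appears necessary.
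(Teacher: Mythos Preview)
Your proposal is correct and follows essentially the same approach as the paper: both reinterpret $S$-integrality as the condition $\sum_{v\in S}\lambda_v(\phi^n(P),\infty)\ge h(\phi^n(P))$, use Lemma~\ref{lem2.4}(c) to split into a small-$n$ regime and a regime where $\phi^n(P)\in\Gamma_{\phi,S}(\infty,P,1/2)$, verify that $\phi^2\notin\bar K[z]$ forces $\infty$ to be non-exceptional, and then invoke Theorem~\ref{th2.9} with $A=\infty$, $\epsilon=1/2$. The only cosmetic difference is that you cite part~(a) of Theorem~\ref{th2.9} and explicitly convert the height bound $\gamma_2$ back into a bound on $n$ via $d^n\hat h_\phi(P)\le h(\phi^n(P))+O(h(\phi)+1)$, whereas the paper packages this step into a single appeal to Theorem~\ref{th2.9}; the final absorption of additive constants into $\gamma$ (using the positive lower bound from Lemma~\ref{lem2.5}) is likewise handled more explicitly in your write-up than in the paper.
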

\begin{proof}
An element $\alpha \in K$ is in $R_S$ if and only if $|\alpha|_v \leq 1$ for all $v \not\in S$, or equivalently, if and only if
\begin{center}
 $h(\alpha) = \sum_{v \in S} \log \max \{ |\alpha|_v,1 \}$.
\end{center} Another fact is that
\begin{center}
 $\log \max \{ |\alpha|_v,1 \} \leq \lambda_v(\alpha, \infty)$.
\end{center} This implies for $\alpha \in R_S$ that $h(\alpha) \leq \sum_{v \in S} \lambda_v(\alpha, \infty)$.

Let $ n \geq 1$ satisfy $z(\phi^n(P)) \in R_S$. Then
\begin{center}
 $h(\phi^n(P)) \leq \sum_{v \in S}  \lambda_v(\phi^n(P), \infty)$.
\end{center}
Lemmas \ref{lem2.4} tell us that
\begin{center}
 $h(\phi^n(P)) \geq \hat{h}_{\phi}(\phi^n(P)) - c_3h(\phi) - c_4=\deg(\phi^n)\hat{h}_{\phi}(P) - c_3h(\phi) - c_4$,
\end{center} which implies that
\begin{center}
 $d^n \hat{h}_{\phi}(P) - c_3h(\phi) - c_4 \leq \sum_{v \in S} d_v \lambda_v(\phi^n(P), \infty)$.
\end{center}
The rest of  the proof is divided in two cases:
First one, when 
\begin{center}
 $d^n\hat{h}_{\phi}(P) \leq 2c_3h(\phi) + 2 c_4$.                
\end{center}In this case, 
\begin{center}
 $n \leq \log_{d}^+ \left(  \dfrac{2c_3h(\phi) + 2c_4}{\hat{h}_{\phi}(P)}  \right)$.
\end{center} In the second case , $d^n\hat{h}_{\phi}(P) \geq 2c_3h(\phi) + 2 c_4$. Therefore
\begin{center}
 $\sum_{v \in S}  \lambda_v(\phi^n(P), \infty) \geq \frac{1}{2}d^n \hat{h}_{\phi}(P)=\frac{1}{2}\hat{h}_{\phi}(\phi^n(P))$.
\end{center} Now  Theorem \ref{th2.9} with $\epsilon =1/2, A= \infty$ ($\infty$ is not exceptional for $\phi$) tells us that $n$ is at most
\begin{center}
 $ \gamma + \log^+_{d} \left( \dfrac{h(\phi) + \hat{h}_{\phi}(\infty)}{\hat{h}_{\phi}(P)} \right)$,
\end{center} for an effective constant $\gamma$ depending only on $K, \phi$ and $ |S|$. Both bounds are on the desired form since $\hat{h}_{\phi}(\infty)\leq  h(\infty) +O(1)=0 + O(1)$.
\end{proof}

 \begin{remark}
 Theorem \ref{th2.9} delivers, in particular, under its conditions, an explicit upper bound for
 \begin{center}
 $\# \{ n \geq 1 ; \dfrac{1}{\phi^n(P) - A} $ is quasi-$(S, \epsilon)$-integral $ \}$.
  
 \end{center} 

 \end{remark}
 
 \begin{corollary}
  Under the hypothesis of Theorem \ref{th2.9},
  \begin{center}
   $\displaystyle\lim_{n \rightarrow \infty} \dfrac{\lambda_v(\phi^n(P),A)}{d^n}=\displaystyle\lim_{n \rightarrow \infty} \dfrac{\lambda_v(\phi^n(P),A)}{\hat{h}_\phi(\phi^n(P))}=0$ for every $v \in M_K$.
  \end{center}
\end{corollary}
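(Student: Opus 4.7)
The plan is to reduce both limits to a single statement and then apply Theorem \ref{th2.9} with the single place set $S=\{v\}$ and arbitrary $\epsilon>0$. First I would observe that the two limits are equivalent: since $P$ is not preperiodic for $\phi$ we have $\hat{h}_\phi(P)>0$ (by the characterization of preperiodic points via canonical height cited in the excerpt), and the functional equation $\hat{h}_\phi(\phi^n(P))=d^n\hat{h}_\phi(P)$ shows that the two denominators $d^n$ and $\hat h_\phi(\phi^n(P))$ differ only by the positive constant $\hat h_\phi(P)$. So it suffices to prove that $\lambda_v(\phi^n(P),A)/\hat h_\phi(\phi^n(P)) \to 0$.

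Next I would record a sign estimate: because $K$ is the function field of a smooth projective curve over an algebraically closed field of characteristic zero, every place $v\in M_K$ is non-archimedean. The ultrametric inequality applied to $|x_1y_2-x_2y_1|_v$ then yields $\rho_v\le 1$, so $\lambda_v(\cdot,\cdot)\ge 0$ throughout. Hence the ratios we study are nonnegative, and it is enough to produce an arbitrarily small upper bound for large $n$.

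Now fix $\epsilon>0$ and take $S=\{v\}$ in Theorem \ref{th2.9}. Part (a) provides constants $\gamma_1,\gamma_2$ such that any $\phi^{(n)}(P)\in\Gamma_{\phi,\{v\}}(A,P,\epsilon)$ with $n>\gamma_1+\log_d^+\bigl((\hat h_\phi(A)+h(\phi))/\hat h_\phi(P)\bigr)$ has naive height at most $\gamma_2$. On the other hand, Lemma \ref{lem2.4}(c) gives
\[
h(\phi^n(P))\ge \hat h_\phi(\phi^n(P))-c_3h(\phi)-c_4=d^n\hat h_\phi(P)-c_3h(\phi)-c_4\to\infty,
\]
so only finitely many $n$ can satisfy the height bound $\gamma_2$. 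Combined with the finite interval $n\le \gamma_1+\log_d^+(\cdots)$, this shows that $\{n:\phi^n(P)\in\Gamma_{\phi,\{v\}}(A,P,\epsilon)\}$ is finite. For every sufficiently large $n$ we therefore have $\lambda_v(\phi^n(P),A)<\epsilon\,\hat h_\phi(\phi^n(P))$, and together with $\lambda_v\ge 0$ this gives
\[
0\le \frac{\lambda_v(\phi^n(P),A)}{\hat h_\phi(\phi^n(P))}<\epsilon \qquad\text{for all sufficiently large }n.
\]
Since $\epsilon>0$ is arbitrary, the limit is $0$, which is the desired conclusion.

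The only mildly delicate point is extracting \emph{finiteness} of $\Gamma_{\phi,\{v\}}(A,P,\epsilon)$ from the height bound in Theorem \ref{th2.9}(a); this is immediate once one uses Lemma \ref{lem2.4}(c) to see that $h(\phi^n(P))$ grows like $d^n\hat h_\phi(P)$. Everything else is bookkeeping, and the non-archimedean nature of function field places takes care of the lower bound on $\lambda_v$ without any further work.
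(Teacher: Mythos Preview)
Your argument is correct and follows essentially the same route as the paper: apply Theorem \ref{th2.9} with the singleton $S=\{v\}$ and an arbitrary $\epsilon>0$, deduce that the membership $\phi^n(P)\in\Gamma_{\phi,\{v\}}(A,P,\epsilon)$ can hold for only finitely many $n$, and let $\epsilon\to 0$. The paper's proof is a two-line sketch of exactly this idea; your write-up simply fills in the details (equivalence of the two denominators, the sign $\lambda_v\ge 0$ from non-archimedeanity, and the passage from a height bound to finiteness via Lemma \ref{lem2.4}(c)). One small streamlining: rather than invoking part (a) of Theorem \ref{th2.9} together with the height-growth argument, you could cite part (b) directly, which already gives an explicit upper bound on the $n$ with $\phi^n(P)\in\Gamma$ and makes the finiteness immediate without the extra step through Lemma \ref{lem2.4}(c).
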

\begin{proof}
 Applying Theorem \ref{th2.9} for the set of places that contains just the place $v$, we conclude that for every natural $n$ big enough, it will be true that 
 \begin{center}
  $\dfrac{\lambda_v(\phi^n(P),A)}{d^n} \leq \epsilon \hat{h}_{\phi}(P)$.
 \end{center} Choosing $\epsilon$ sufficiently small, the result is proven.

\end{proof} 
\section{Multiplicative dependence in orbits over function fields}
\subsection{ $S$-units, algebraic dynamics, and multiplicative dependence.}
\begin{lemma}\label{lem3.1}
Let $\varphi(z) \in K(z)$  suppose that $|\varphi^{-1}(\infty)| \geq 3$. Then $$\{ f \in K : \varphi(f) \in R_S\} $$ is finite, and each $f$ in this set has height bounded by a constant $C(\varphi,K,|S|)$
\end{lemma}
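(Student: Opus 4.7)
I plan to run a Siegel-style argument using the three distinct poles of $\varphi$ and reduce to the effective Roth theorem of Wang (Lemma~\ref{lem2.8}). Fix three distinct poles $\alpha_1,\alpha_2,\alpha_3\in\varphi^{-1}(\infty)\subset\bar{K}$, with pole orders $e_i\geq 1$. For $f\in K$ satisfying $\varphi(f)\in R_S$ the goal is to produce a finite set $S_0$ depending only on $\varphi$ (and the $\alpha_i$) together with a lower bound of the shape
\[
 \sum_{v\in S\cup S_0}\log^+|f-\beta_v|_v^{-1}\ \geq\ 3\,h(f)-C_1,
\]
where $\beta_v\in\{\alpha_1,\alpha_2,\alpha_3\}$ and $C_1=C_1(\varphi,K,|S|)$ is effective, so that Lemma~\ref{lem2.8} applies with $\mu=5/2>2$.

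\textbf{Distance estimates.} Two ingredients combine to give the displayed inequality. First, because $\varphi$ has a pole of order $e_i$ at $\alpha_i$, the local behavior $|\varphi(z)|_v\asymp |z-\alpha_i|_v^{-e_i}$ near $\alpha_i$ shows that outside some finite set $S_0$ (controlling the places of bad reduction of $\varphi$ and those where the $\alpha_i$ become $v$-adically close) the hypothesis $|\varphi(f)|_v\leq 1$ coming from $\varphi(f)\in R_S$ forces $|f-\alpha_i|_v\geq 1$ for every $i$; consequently $\log^+|f-\alpha_i|_v^{-1}=0$ off $S\cup S_0$. Second, the identity $\sum_{v\in M_K}\log^+|f-\alpha_i|_v^{-1}=h(f-\alpha_i)=h(f)+O(1)$, summed over $i=1,2,3$, gives $3h(f)+O(1)$, while the distinctness of $\alpha_1,\alpha_2,\alpha_3$ implies by the ultrametric/triangle inequality that at each fixed $v$ at most one of the $|f-\alpha_i|_v$ can be small, so $\sum_{i=1}^{3}\log^+|f-\alpha_i|_v^{-1}=\log^+|f-\beta_v|_v^{-1}$ with $\beta_v:=\alpha_{i(v)}$ the closest pole, up to a per-place discrepancy supported on a finite (and effectively computable) set. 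Discarding the null contribution from $v\notin S\cup S_0$ now produces the claimed inequality with an effective $C_1$.

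\textbf{Roth step, finiteness, and main difficulty.} Once the displayed inequality is established, for $h(f)\geq 2C_1$ its left-hand side exceeds $\tfrac{5}{2}h(f)$, so Lemma~\ref{lem2.8} applied to $S\cup S_0$ with approximants $\beta_v\in\{\alpha_1,\alpha_2,\alpha_3\}$ and $\mu=5/2$ yields an effective upper bound on $h(f)$; for $h(f)<2C_1$ the bound is already explicit, and together these give the constant $C(\varphi,K,|S|)$ claimed in the lemma. The finiteness assertion (stronger than boundedness of the height in the function-field setting) then follows by combining the height bound with a standard Siegel-type argument for $\mathbb{P}^1$ minus three points over function fields, the three distinct poles $\alpha_1,\alpha_2,\alpha_3$ of $\varphi$ providing exactly the geometric input needed to rule out an infinite family of solutions of bounded height. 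The main obstacle is the local bookkeeping: one has to certify that every implied constant arising from the pole multiplicities $e_i$, from $v$-adic collisions of the $\alpha_i$, and from bad reduction of $\varphi$ is absorbed either into the finite set $S_0$ or into $C_1$, with $C_1$ depending only on $\varphi,K,|S|$ and not on $f$. Once this is under control, the Roth step is essentially automatic.
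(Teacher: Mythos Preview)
The paper does not give its own proof of this lemma; it simply records that the statement is \cite[Theorem~12(i)]{HSW}. So your proposal is not competing with an argument in the paper but rather supplying one where the paper only cites. Your height-bound portion is sound and is in fact the natural route: use the three distinct poles $\alpha_1,\alpha_2,\alpha_3$ to see that $\varphi(f)\in R_S$ forces $\log^+|f-\alpha_i|_v^{-1}=0$ off a finite set $S\cup S_0$, sum the identities $\sum_v\log^+|f-\alpha_i|_v^{-1}=h(f)+O(1)$ over $i=1,2,3$, collapse to a single $\beta_v$ per place via the ultrametric inequality, and feed the resulting inequality into Wang's effective Roth theorem (Lemma~\ref{lem2.8}) with $\mu=5/2$. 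This is exactly the mechanism one expects, and the bookkeeping you describe (bad primes of $\varphi$, $v$-adic collisions of the $\alpha_i$) is the right list of things to absorb into $S_0$ and $C_1$.

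The part that is genuinely underdeveloped is the passage from ``bounded height'' to ``finite''. Over a function field with algebraically closed constant field, Northcott fails, and your appeal to a ``standard Siegel-type argument for $\mathbb{P}^1$ minus three points'' hides the real issue: the $S$-unit equation $u+v=1$ has infinitely many constant solutions, and likewise if the three poles (and $\varphi$ itself) are defined over the constant field $k$, then every $f\in k$ away from the poles satisfies $\varphi(f)\in k\subset R_S$. So finiteness cannot follow from the three-pole geometry alone; one needs either a non-isotriviality hypothesis or a separate argument controlling constant solutions. The cited result in \cite{HSW} presumably handles this, but your sketch does not. If you want a self-contained proof, you should either (i) add the running hypothesis that $\varphi$ is not isotrivial and argue that constants $f\in k$ with $\varphi(f)\in R_S$ are then finite, or (ii) explicitly invoke the function-field Siegel theorem in the form that includes the isotriviality caveat, and explain why the exceptional case is excluded here.
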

\begin{proof} This is \cite[Theorem 12(i)]{HSW}.
\end{proof} The following is a version of \cite[Theorem 1.2]{BOSS} for function fields.
\begin{theorem}\label{th3.2}  Let $\varphi(z) \in K(z)$ of degree at least two, and assume that $\varphi$ is not isotrivial. Then \newline  (a) If $|\varphi^{-1}(\{0,\infty \})| \geq 3$, then $$\{ f \in K : \varphi(f) \in R^*_S\} $$ is finite, and its elements have height bounded from above effectively in terms of $\varphi,|S|$ and $K$. In other words, if the referred set is infinite, then $\varphi$ has one of the following forms:
$$
\varphi(X)=f(X-g)^{\pm d}, f \neq 0, \text{ or } \varphi(X)=f(X-g)^{ d}/(X-h)^{ d}, f(g-h) \neq 0.
$$
(b) Let $$\mathcal{F}_2(K,\varphi,R_S^*)=\{(n,\alpha) \in \mathbb{Z}_{\geq 2} \times \text{Wander}_K(\varphi): \varphi^{(n)}(\alpha) \in R_S^* \}.$$ If $(n,\alpha) \in \mathcal{F}_2(K,\varphi,R_S^*)$, then $n$ belongs to a finite set bounded by an explicit constant depending on $\varphi,K,|S|$ and $\inf_{\hat{h}_\varphi(P)>0}\hat{h}_\varphi(P)$, and $\alpha$ to a set of  height bounded effectively from above effectively in terms of $\varphi,|S|$ and $K$, unless  $\varphi$ has the form $\varphi(X)=fX^{\pm d}$.

\end{theorem}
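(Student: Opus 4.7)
The plan is two-tiered. For part (a), I reduce to Lemma~\ref{lem3.1} by passing to an auxiliary rational function that has at least three poles. For part (b), I apply part (a) either to $\varphi$ directly or to the second iterate $\varphi^{(2)}$, and then use Lemmas~\ref{lem2.4} and~\ref{lem2.5} to convert the resulting height bound into bounds on both $n$ and $h(\alpha)$.

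The central trick for part (a) is to consider
\[
\tilde\varphi(X) := \varphi(X) + \frac{1}{\varphi(X)} = \frac{\varphi(X)^2 + 1}{\varphi(X)}.
\]
Writing $\varphi = F/G$ with $F,G\in K[X]$ coprime, the polynomials $FG$ and $F^2+G^2$ share no common root in $\bar{K}$ (any such root would kill one of $F,G$, forcing the other to vanish too, contradicting coprimality), so the pole locus of $\tilde\varphi$ in $\mathbb{P}^1(\bar{K})$ is precisely $\varphi^{-1}(\{0,\infty\})$; hence $|\tilde\varphi^{-1}(\infty)|\geq 3$. Moreover, the non-Archimedean ultrametric gives $v(\tilde\varphi(f))<0$ whenever $v(\varphi(f))\neq 0$ (since the summands $\varphi(f)$ and $1/\varphi(f)$ have opposite-sign nonzero valuations), so for $v\notin S$ the condition $\tilde\varphi(f)\in R_S$ forces $v(\varphi(f))=0$, i.e.\ $\varphi(f)\in R_S^*$; the reverse implication is clear. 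Lemma~\ref{lem3.1} applied to $\tilde\varphi$ then yields both finiteness and the effective height bound for $\{f\in K:\varphi(f)\in R_S^*\}$. The classification into the listed normal forms when $|\varphi^{-1}(\{0,\infty\})|\leq 2$ is elementary from the structure of degree-$d$ rational functions.

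For part (b), fix $(n,\alpha)\in\mathcal{F}_2(K,\varphi,R_S^*)$, set $k=1$ if $|\varphi^{-1}(\{0,\infty\})|\geq 3$ and $k=2$ otherwise, and put $\beta:=\varphi^{(n-k)}(\alpha)$, so that $\varphi^{(k)}(\beta)\in R_S^*$. In the first case, (a) applied to $\varphi$ bounds $h(\beta)$ effectively. In the second case, $\varphi$ is one of the normal forms of (a); excluding $fX^{\pm d}$, direct computation yields $|(\varphi^{(2)})^{-1}(\{0,\infty\})|\geq d+1\geq 3$---for instance, when $\varphi(X)=f(X-g)^d$ with $g\neq 0$, the $d$ distinct roots in $\bar{K}$ of $f(X-g)^d=g$ are new zeros of $\varphi^{(2)}$, and the other two non-excluded forms admit analogous computations. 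Part (a) applied to $\varphi^{(2)}$ then bounds $h(\beta)$ effectively. Lemma~\ref{lem2.4}(c) converts this into a bound on $\hat{h}_\varphi(\beta)=d^{n-k}\hat{h}_\varphi(\alpha)$. Since $\alpha$ is wandering, Lemma~\ref{lem2.5} gives $\hat{h}_\varphi(\alpha)\geq\inf_{\hat{h}_\varphi(P)>0}\hat{h}_\varphi(P)>0$, and combining the two bounds forces $d^{n-k}$, hence $n$, to be effectively bounded. With $n$ bounded, $\hat{h}_\varphi(\alpha)$ and then $h(\alpha)$ (via Lemma~\ref{lem2.4}(c)) are also effectively bounded.

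The excluded form $\varphi(X)=fX^{\pm d}$ is a genuine obstruction, since $\varphi^{(n)}(\alpha)$ is then a constant multiple of $\alpha^{\pm d^n}$, which is an $S$-unit for any $S$-unit $\alpha$. The only non-routine bookkeeping in the plan is the case-by-case verification that $|(\varphi^{(2)})^{-1}(\{0,\infty\})|\geq 3$ for each of the three non-excluded normal forms; the rest is a clean combination of the $\tilde\varphi$-construction, Lemma~\ref{lem3.1}, and the canonical-height machinery.
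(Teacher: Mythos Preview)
Your proof is correct and follows essentially the same approach as the paper. For (a), the paper also passes to $\psi(z)=\varphi(z)+1/\varphi(z)$ and invokes Lemma~\ref{lem3.1}; you supply more detail (coprimality of $FG$ and $F^2+G^2$, and the ultrametric argument identifying $\{\varphi(f)\in R_S^*\}$ with $\{\tilde\varphi(f)\in R_S\}$), but the idea is identical. For (b), the paper always reduces to $\varphi^{(2)}$ and then cites Riemann--Hurwitz together with \cite[Theorem 1.2]{BOSS} to rule out the exceptional shapes for $\varphi^{(2)}$, whereas you split into $k=1$ or $k=2$ and verify $|(\varphi^{(2)})^{-1}(\{0,\infty\})|\geq d+1$ by direct computation on the three normal forms; this is a mild reorganization rather than a different strategy, and the canonical-height bookkeeping via Lemmas~\ref{lem2.4} and~\ref{lem2.5} is the same in both.
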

\begin{proof} For (a), such fact follows from the ideas in the proof of \cite[Proposition 1.5(a)]{KLSTYZ}, namely, 
by our hypothesis, the function $\psi(z):= \varphi(z)+1/\varphi(z)$ satisfies the hypothesis of the previous Lemma, thus $\{ f \in K : \psi(f) \in R_S\}$ is finite.
But $\psi(\beta) \in R_S$ whenever $\varphi(\beta) \in R^*_S$, hence it follows that $\{ f \in K : \varphi(f) \in R^*_S\}$ is finite, and each $f$ in it has height bounded by $C(\varphi+1/\varphi,K,|S|)$.

For (b), we consider the well-defined map $$\mathcal{F}_2(K,\varphi,R_S^*) \rightarrow \{ f \in K : \varphi^{(2)}(f) \in R^*_S\} $$ sending $(n,\alpha)$ to $\varphi^{(n-2)}(\alpha)$. By (a), if $\varphi^{(2)}$ is in the same conditions of $\varphi$ in (a), then there are only finitely many possibilities for such $\varphi^{(n-2)}(\alpha)$, whose heights are bounded by $C(\varphi^{(2)}+1/\varphi^{(2)},K,|S|)$. Hence we also obtain an effective bound $C_1(\varphi,K,|S|)$ for the canonical heights of such points, and thus also for the heights of the referred $\alpha$'s. Taking a certain $\varphi^{(n-2)}(\alpha)$ among such possibilities, and using by Lemma \ref{lem2.5}  that $\inf_{\hat{h}_\varphi(P)>0} \hat{h}_\varphi(P)$ exists, we have by the same calculations in the proof of \cite[Lemma 2.3]{BOSS} that $n$ is bounded by 
$2+ \log \left( \frac{C_1(\varphi,K,|S|)}{inf_{\hat{h}_\varphi(P)>0} \hat{h}_\varphi(P)} \right)$, unless $\varphi^{(2)}$ has one of the special forms from (a). Using the Riemann-Hurwitz formula, which is valid in characteristic zero, the same method carried out in the end of the the proof of \cite[Theorem 1.2]{BOSS} can be performed, concluding the desired results.

\end{proof} The next result generalizes \cite[Theorem 1.3]{BOSS} to function fields.

\begin{theorem}\label{th3.3}Let $r,s \in \mathbb{Z}$ with $rs \neq 0$, and set $$ \rho=\dfrac{\log (|s|/|r|)}{\log d}+1$$ Let $\phi \in K(z)$ not isotrivial with degree $d\geq 2$. Assume that $0$ is not a periodic point for $\phi$ and that $|\phi^{-1}(\{0,\infty \})| \geq 3$. We let 
$\mathcal{E}_\rho(K,\phi, S, r,s)$ to be $$ \left\{(n,k,f,u) \in \mathbb{Z}_{\geq \rho} \times \mathbb{Z}_{\geq 0}\times \textit{Wander}_K(\phi)\times R_S^* : \phi^{(n+k)}(f)^r=u\phi^{(k)}(f)^s \right\}.  $$ 
If $ (n,k,f,u) \in \mathcal{E}_\rho(K,\phi, S, r,s)$, then $n$ and $k$ are bounded effectively in terms of $\phi,K, \rho,|S|$ and $\inf_{\hat{h}_\varphi(P)>0}\hat{h}_\varphi(P)$, and $f$ and $u$ have height bounded from above effectively in terms of $\phi,\rho,|S|$ and $K$.
\end{theorem}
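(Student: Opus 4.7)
The plan is to reduce the defining equation of $\mathcal E_\rho$ to an instance of Theorem \ref{th3.2}(a) through a substitution, and then extract the four bounds from the resulting effective height estimate together with Lemma \ref{lem2.5}.

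Setting $\alpha := \phi^{(k)}(f)$ transforms the equation $\phi^{(n+k)}(f)^r = u\phi^{(k)}(f)^s$ into $\Psi_n(\alpha) = u \in R_S^*$, where $\Psi_n(z) := \phi^{(n)}(z)^r z^{-s} \in K(z)$. To apply Theorem \ref{th3.2}(a) with $\varphi = \Psi_n$, I would verify: (i) $\Psi_n$ is non-isotrivial (inherited from $\phi$, using that multiplication by $z^{-s}$ cannot collapse a non-isotrivial iterate to a constant-coefficient one unless $\phi^{(n)}$ were of monomial type, which is excluded by $|\phi^{-1}(\{0,\infty\})|\geq 3$); (ii) $\Psi_n$ has degree at least two, which follows from $n\geq\rho$ forcing $|r|d^n > |s|$; (iii) $|\Psi_n^{-1}(\{0,\infty\})|\geq 3$, combining $|(\phi^{(n)})^{-1}(\{0,\infty\})|\geq|\phi^{-1}(\{0,\infty\})|\geq 3$ with the non-periodicity of $0$ under $\phi$. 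The same preimage count rules out the exceptional shapes $f(z-g)^{\pm d}$ and $f(z-g)^d/(z-h)^d$ appearing in the conclusion of Theorem \ref{th3.2}(a), each of which admits at most two preimages of $\{0,\infty\}$. Theorem \ref{th3.2}(a) then delivers an effective bound $h(\alpha) = h(\phi^{(k)}(f)) \leq C_1$, where $C_1 = C_1(\Psi_n,|S|,K)$ is explicit in $\Psi_n$.

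From $\hat{h}_\phi(\phi^{(k)}(f)) = d^k \hat{h}_\phi(f)$, Lemma \ref{lem2.4}(c), and Lemma \ref{lem2.5} (which gives $\hat{h}_\phi(f) \geq \eta := \inf_{\hat{h}_\phi(P)>0}\hat{h}_\phi(P) > 0$), the bound on $h(\alpha)$ forces $d^k \eta \leq C_1 + O(h(\phi))$, effectively bounding $k$; the canonical height comparison then yields a bound on $h(f)$. To bound $n$, I would combine the two-sided height comparison obtained from $\phi^{(n+k)}(f)^r\phi^{(k)}(f)^{-s} = u$ with the iterated Lemma \ref{lem2.4}(a) estimate $h(\phi^{(n+k)}(f)) = d^n h(\phi^{(k)}(f)) + O(d^n h(\phi))$; the threshold $n\geq\rho$ makes $|r|d^n - |s|$ strictly positive and geometrically growing, so that the multiplicative discrepancy between the two sides of the equation, together with the already controlled $h(\phi^{(k)}(f))$ and the positivity $\hat{h}_\phi(f) \geq \eta$, forces $n$ to be effectively bounded. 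Finally, the defining equation expresses $h(u)$ in terms of the now-bounded $h(\phi^{(n+k)}(f))$ and $h(\phi^{(k)}(f))$, so $h(u)$ is bounded effectively.

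The main obstacle is the coupling between $n$ and $k$: the constant $C_1(\Psi_n,|S|,K)$ from Theorem \ref{th3.2}(a) depends on $h(\Psi_n)$, which grows like $d^n h(\phi)$, so a direct application only yields $k \lesssim n + O(1)$ rather than an absolute bound on either index. The resolution is precisely the role of the threshold $\rho$ in the statement: it guarantees $|r|d^n - |s| \geq (d-1)|s| > 0$, and in fact $|r|d^n - |s|$ grows geometrically in $n$, which when combined with the two directions of the height inequality and the positive lower bound $\eta$ on the canonical height decouples $n$ from $k$. A secondary delicate point is verifying that $\Psi_n$ avoids the exceptional shapes in Theorem \ref{th3.2}(a) uniformly in $n$; this is handled by the Riemann--Hurwitz formula, exactly as in the proof of Theorem \ref{th3.2}(b), together with the hypothesis $|\phi^{-1}(\{0,\infty\})| \geq 3$ that also serves to forbid $\phi$ itself from having the exceptional monomial form $fX^{\pm d}$.
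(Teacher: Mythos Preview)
Your reduction $\Psi_n(z)=\phi^{(n)}(z)^r z^{-s}$ and the appeal to Theorem~\ref{th3.2}(a) are exactly what the paper does \emph{after} $n$ has been bounded. The gap in your plan is the step where you try to bound $n$ itself. You correctly identify the circularity: the constant $C_1(\Psi_n,|S|,K)$ coming from Theorem~\ref{th3.2}(a) depends on $h(\Psi_n)$, which is of order $d^n h(\phi)$, so applying Theorem~\ref{th3.2}(a) to $\Psi_n$ only gives $k\le n+O(1)$. Your proposed ``resolution'' does not close this gap. The ``two-sided height comparison'' you invoke amounts to the observation that, for $v\notin S$, the relation $r\,\ord_v(\phi^{(n+k)}(f))=s\,\ord_v(\phi^{(k)}(f))$ gives
\[
h(\phi^{(n+k)}(f))\;\le\;\sum_{v\in S}\log^+\bigl|\phi^{(n+k)}(f)\bigr|_v^{-1}\;+\;\frac{|s|}{|r|}\,h(\phi^{(k)}(f))\;+\;O(1).
\]
To extract a bound on $n$ from this you need the $S$-contribution on the right to be at most $\epsilon\,\hat h_\phi(\phi^{(n+k)}(f))$ for some $\epsilon<1$; the geometric growth of $|r|d^n-|s|$ plays no role unless that $S$-term is controlled. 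Nothing in your argument bounds that term: it is not controlled by $h(u)$ (which is itself unbounded a priori), nor by $C_1(\Psi_n)$, nor by Lemma~\ref{lem2.4}.

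This is precisely where the paper's proof diverges from yours. The paper first applies Theorem~\ref{th2.9}(b) with $A=0$ (which is legitimate once one checks $0$ is not exceptional) and $\epsilon=1/3$; this is the diophantine input, ultimately Roth's theorem over function fields, and it furnishes an effective $\gamma_3=\gamma_3(\phi,|S|,K)$ such that for $n+k\ge\gamma_3$ the $S$-term above is at most $\tfrac13\,\hat h_\phi(\phi^{(n+k)}(f))$. With this in hand the displayed inequality collapses to $(1-\tfrac13)d^{n+k}\hat h_\phi(f)\le d^{\rho-1+k}\hat h_\phi(f)+O(h(\phi))$, bounding $n$ and then $k$ via Lemma~\ref{lem2.5}. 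Only in the complementary case $n+k\le\gamma_3$ does the paper run your $\Psi_n$-argument, and there the dependence of the constant on $n$ is harmless because $n$ is already bounded. In short: your outline is missing the use of Theorem~\ref{th2.9}, which is the step that actually decouples $n$ from $k$.
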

\begin{proof}
Arguing in the same way as in the proof of \cite[Theorem 1.3]{BOSS}, we can assume that $0$ is not an exceptional point for $\phi$, which allows us to apply Theorem \ref{th2.9}(b) with $A=0$, and $\epsilon$ to be specified later, yielding an effective constant $\gamma_3(\phi,\epsilon,|S|,K)$ such that
$$
\max \left\{ n \geq 0 : \displaystyle\sum_{v \in S} \log^+(|\phi^n(\alpha)|_v^{-1})\geq \epsilon \hat{h}_\phi(\phi^n(\alpha))\right\} \leq \gamma_3
$$

 We study triples $(n,k,\alpha) \in \mathbb{Z}_{\geq 1}\times \mathbb{Z}_{\geq 0}\times \text{Wander}_K(\phi)$ such that 
$$
|\phi^{n+k}(\alpha)|_v^r=|\phi^{k}(\alpha)|_v^s \quad \text{ for all } v \in M_K\setminus S.
$$ We then divide the proof in two cases.
\newline \textbf{Case 1: $n+k \geq \gamma_3$.}\newline
In this case Theorem \ref{th2.9} tells us that $(n,k,\alpha)$ satisfies
\begin{equation}
 \displaystyle\sum_{v \in S} \log^+(|\phi^{n+k}(\alpha)|_v^{-1})\leq \epsilon \hat{h}_\phi(\phi^{n+k}(\alpha)).
\end{equation}we compute

\begin{align*}
h(\phi^{n+k}(\alpha)) &=h(\phi^{n+k}(\alpha)^{-1})= \sum_{v\in S} \log^+ |\phi^{n+k}(\alpha)|^{-1}_v+\sum_{v\notin S} \log^+ |\phi^{n+k}(\alpha)|^{-1}_v\\
& \leq   \epsilon \hat{h}_\phi(\phi^{n+k}(\alpha))\  +\sum_{v\notin S} \log^+ |\phi^{k}(\alpha)|^{-s/r}_v\\
& \leq   \epsilon \hat{h}_\phi(\phi^{n+k}(\alpha))\  +\left|\frac{s}{r}\right|\sum_{v\notin S}h(\phi^{k}(\alpha)^{-1})\\
& \leq  \epsilon \hat{h}_\phi(\phi^{n+k}(\alpha))\  +d^{\rho-1}h(\phi^k(\alpha))\\
& \leq \epsilon \hat{h}_{\phi}(\phi^{n+k}(\alpha)) + d^{\rho-1} (\hat{h}_{\phi}(\phi^k(\alpha)) + O(h(\phi)+1))\\
\end{align*} implying that
$$
(1-\epsilon)d^{n+k} \hat{h}_\phi(\alpha)
\leq  d^{\rho-1+k}\hat{h}_\phi(\alpha)+ O(d^{\rho-1}(h(\phi))+1),$$ which with $\epsilon =1/3$ yields that
$$
d^{n+k-1}\hat{h}_\phi(\alpha)\leq O(d^{\rho-1}(h(\phi))+1),
$$
 and hence
$$
n,k \leq \gamma_4 +  \log^+_{d} \left( \dfrac{h(\phi)}{\inf_{\hat{h}_\phi(P)>0}\hat{h}_{\phi}(P)} \right)
$$ for an effective constant $\gamma_4(K,|S|,\phi,\rho)$. Given that there are at most finitely many possibilities for $n$ and $k$, this also yields a desired effective height bound for the referred $\alpha$'s. 
\newline \textbf{Case 2: $n+k \leq \gamma_3$.}\newline
Since$n$ and $k$ are bounded, we may assume that they are fixed. In this case, we let
$$ g(z)=\phi^n(z)^r/z^s$$ so that we have $$g(\phi^k(\alpha))=\phi^{n+k}(\alpha)^r/\phi^k(\alpha)^s \in R_S^*. $$ This says that $$\phi^k(\alpha) \in \{ f \in K : g(f) \in R^*_S\}$$
By \ref{lem3.1}, the set above has cardinality effectively bounded, and its elements as well as the referred $\alpha$'s have height effectively bounded from above as desired, except if $g$ has at most two zeros and poles. But the assumption that $0$ is not periodic implies that $0$ is a pole of $g$, and the assumption that $\phi(X) \neq cX^{\pm d}$  implies that $\phi^n$ has at least two poles or zeros distinct from $0$. Hence $g$ has at least three poles and zeros.
\end{proof}\subsection{Polynomial dynamics and multiplicative dependence over function fields in one variable.}
In this subsection, we consider $K$ to be an algebraic function field in one variable of characteristic zero. When we deal with polynomials in this setting, one can obtain independence on the exponents $r,s$ from Theorem \ref{th3.3}.

The result stated below is analogous to a Schinzel-Tijdeman result, now over the fields treated here.
\begin{lemma}\cite{BPV}\label{lem3.4}
Let $f \in K[X]$ be a polynomial of degree $n$ with at least two distinct zeros in some algebraic closure of $K$. Then the equation $$f(x)=y^m \text{ in } x,y \in K, m \in \mathbb{Z}_{\geq 0}, y \notin k$$ implies 
$$m \leq B(f,K),$$ where $B=B(f,K)$ is an effective constant.
\end{lemma}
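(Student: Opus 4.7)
The plan is to invoke the Mason--Stothers $abc$-inequality for function fields. First I would pass to a finite extension $L/K$ over which $f$ splits completely, writing $f(X) = c\prod_{i=1}^{r}(X - \alpha_i)^{e_i}$ with distinct $\alpha_i \in L$ and $r \geq 2$. Since $L$ is still an algebraic function field over $k$ and $y \notin k$ is preserved, the equation $f(x) = y^m$ with $y$ non-constant persists in $L$; moreover $x$ must itself be non-constant, since $y \notin k$ with $k$ algebraically closed forces $y^m \notin k$, hence $f(x) \notin k$ and $x \notin k$. In particular $h_L(x), h_L(y) \geq 1$.

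The crucial observation is the divisibility by $m$ forced by $f(x) = y^m$. Let $E \subset M_L$ be a finite exceptional set containing every place where some $\alpha_i - \alpha_j$ (for $i \neq j$) or the leading coefficient $c$ has nonzero valuation. For any $w \notin E$: if $w(x) < 0$, then $w(x - \alpha_i) = w(x)$ for every $i$, so $w(f(x)) = n\, w(x)$; if $w(x) \geq 0$, then $w(x - \alpha_i) > 0$ for at most one index $i$, giving $w(f(x)) = e_i\, w(x - \alpha_i)$. Combined with $w(f(x)) = m\, w(y)$, this means every pole of $x$ outside $E$ has order divisible by $m/\gcd(m, n) \geq m/n$, and every zero of $x - \alpha_i$ outside $E$ has order divisible by $m/\gcd(m, e_i) \geq m/e_i$. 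Consequently the number of distinct poles of $x$, respectively distinct zeros of $x - \alpha_i$, is at most $(n/m)\, h_L(x) + |E|$, respectively $(e_i/m)\, h_L(x) + |E|$.

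Finally, I would apply Mason's theorem to the identity $(x - \alpha_1) + (\alpha_2 - x) = \alpha_2 - \alpha_1$: setting $u = (x - \alpha_1)/(\alpha_2 - \alpha_1)$ and $v = (\alpha_2 - x)/(\alpha_2 - \alpha_1)$ gives $u + v = 1$ with $u, v$ non-constant, and Mason yields
\[
h_L(u) \leq |S_{u,v}| + 2g_L - 2,
\]
where $S_{u,v}$ is the set of places at which $u$ or $v$ has a zero or pole. Bounding $|S_{u,v}|$ by the distinct zeros of $x - \alpha_1$ and of $x - \alpha_2$, the distinct poles of $x$, and an $O_f(1)$ contribution from $E$ together with the divisor of $\alpha_2 - \alpha_1$, one obtains
\[
h_L(x) + O_f(1) \;\leq\; \frac{e_1 + e_2 + n}{m}\, h_L(x) + C(f, L),
\]
for some effective constant $C(f, L)$. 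Taking $m > 2(e_1 + e_2 + n)$ turns this into an effective upper bound on $h_L(x)$, and then $m\, h_L(y) = h_L(f(x)) \leq n\, h_L(x) + O(1)$ with $h_L(y) \geq 1$ yields the desired effective bound $m \leq B(f, K)$, after translating constants from $L$ back to $K$ via the controlled invariants $[L:K]$ and $g_L$ (the latter from Riemann--Hurwitz). The main obstacle I expect is precisely this bookkeeping: handling the finite exceptional set $E$ and the extension $L/K$ cleanly, so that the final constant $B(f,K)$ depends only on $f$ and $K$.
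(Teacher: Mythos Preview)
The paper does not supply a proof of this lemma; it is quoted directly from Brindza--Pint\'er--V\'egs\H{o} \cite{BPV} as a black box. Your proposal via the Mason--Stothers $abc$-inequality is exactly the standard route to Schinzel--Tijdeman over function fields and is, in outline, what \cite{BPV} does as well, so there is nothing to compare against in the present paper and your sketch is on target.

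One minor bookkeeping point (which you already flagged as the likely obstacle): your exceptional set $E$ should also contain the poles of each individual $\alpha_i$, not only the support of the $\alpha_i - \alpha_j$ and of $c$. Without this, the implication ``$w(x) < 0 \Rightarrow w(x-\alpha_i) = w(x)$'' can fail at a place where $\alpha_i$ itself has a pole. Once $E$ is enlarged in this way, your counts for the distinct zeros of $x-\alpha_i$ and the distinct poles of $x$ go through as written, Mason applied to $u+v=1$ gives
\[
h_L(x) + O_f(1) \;\le\; \frac{e_1+e_2+n}{m}\,h_L(x) + C(f,L),
\]
and the final step $m\,h_L(y) = h_L(f(x)) \le n\,h_L(x) + O(1)$ together with $h_L(y)\ge 1$ (valid since $y\notin k$) yields the effective bound $m \le B(f,K)$ after descending the constants from $L$ to $K$.
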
The next result generalizes \cite[Theorem 1.7]{BOSS} to function fields in one variable.
\begin{theorem}
Let $\phi \in K[z]$ be a non-isotrivial polynomial of degree $d\geq 2$ such that $\phi$ and $\phi^2$ have no multiple roots, and $0$ is not a periodic point of $\phi$. Let $S \in M_K$ be a finite set of places. Then there exist a set $T_1$ with cardinality effectively bounded in terms of $K,\phi$ and $S$, and a set $T_2$ whose elements have height effectively bounded from above in terms of $K,\phi$ and $S$, such that$$
\{f \in K: (\phi^m(f))^r=u(\phi^n(f))^s, u \in R^*_S, m>n\geq0, r,s \in \mathbb{Z}\}=T_1 \cup T_2.
$$
\end{theorem}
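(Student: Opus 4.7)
The plan is to follow the proof strategy of \cite[Theorem 1.7]{BOSS}, combining Theorem \ref{th3.3} with the function-field Schinzel--Tijdeman bound of Lemma \ref{lem3.4} to eliminate the dependence on the exponents $r,s$ that would otherwise enter through $\rho$ of Theorem \ref{th3.3}. Throughout I assume $rs\neq 0$; the cases $r=0$ or $s=0$ reduce immediately to $\phi^n(f)\in R_S^*$ or $\phi^m(f)\in R_S^*$, handled by Theorem \ref{th3.2}(b) (or Theorem \ref{th3.2}(a) if the relevant index equals $1$). Write $d_0=\gcd(r,s)$, $r=d_0r'$, $s=d_0s'$ with $\gcd(r',s')=1$.

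First I would extract a divisibility structure: for each place $v\notin S$, the identity $r'v(\phi^m(f))=s'v(\phi^n(f))$ together with $\gcd(r',s')=1$ forces $s'\mid v(\phi^m(f))$ and $r'\mid v(\phi^n(f))$. Since the Jacobian of $C$ is a divisible abelian variety over the algebraically closed constant field $k$, these divisibilities on $C\setminus S$ lift to representations $\phi^m(f)=u_1g_1^{s'}$ and, provided $n\geq 1$, $\phi^n(f)=u_2g_2^{r'}$ for some $u_1,u_2\in R_S^*$ and $g_1,g_2\in K$.

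The central step is to apply Lemma \ref{lem3.4} to each identity. The squarefree hypotheses on $\phi$ and $\phi^2$ ensure that $\phi^m$ has at least two distinct roots for every $m\geq 1$: directly for $m=1,2$, and for $m\geq 3$ because $\phi^{-m}(0)\supseteq\phi^{-(m-2)}(\phi^{-2}(0))$ has cardinality at least $d^2\geq 4$. After absorbing $u_1$ using a generalized Schinzel--Tijdeman permitting multiplicative factors in a fixed finitely generated group (or by passage to controlled finite extensions), Lemma \ref{lem3.4} yields the dichotomy: either $g_1\in k$, and consequently $\phi^m(f)\in R_S^*$, or $s'\leq B(\phi,K,|S|)$. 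Symmetrically, if $n\geq 1$, either $\phi^n(f)\in R_S^*$ or $r'\leq B(\phi,K,|S|)$.

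The theorem then follows by case analysis. When $\phi^m(f)\in R_S^*$ (respectively $\phi^n(f)\in R_S^*$ with $n\geq 1$), Theorem \ref{th3.2}(b) effectively bounds $m$ (respectively $n$) and the height of $f$, since $\phi^2$ squarefree excludes the exceptional form $\phi(X)=cX^{\pm d}$; such $f$ contribute to $T_1$ or $T_2$ accordingly. When both $r',s'$ are bounded, the ratio $|s|/|r|=|s'|/|r'|$ is bounded, so $\rho$ of Theorem \ref{th3.3} is uniformly controlled, and applying Theorem \ref{th3.3} to each of the finitely many pairs $(r',s')$ supplies the required bounds on $n$, $m-n$, and the height of $f$. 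The remaining case, $n=0$ with $r'$ a priori unbounded, is handled by examining valuations of $f$ itself: at a pole $v\notin S$ of $f$, $v(\phi^m(f))=d^mv(f)$, so the equation forces $rd^m=s$, whence by coprimality $r'=1$ and $s'=d^m\leq B(\phi,K,|S|)$, bounding $m$; if instead $f$ has no poles outside $S$, then $f\in R_S$ and $\phi^m(f)\in R_S$, and a height bound follows from Corollary \ref{cor2.10} combined with the bound on $s'$. The principal technical obstacle will be the clean absorption of the $S$-units $u_1,u_2$ into $s'$-th and $r'$-th powers, since $R_S^*/(R_S^*)^{s'}$ is finite but has cardinality growing with $s'$; circumventing this cleanly requires either a strengthened Schinzel--Tijdeman admitting units in a fixed finitely generated group, or careful control over the auxiliary finite extensions introduced.
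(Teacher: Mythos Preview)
Your overall plan is sound, but two concrete steps fail as written.

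First, the lift $\phi^m(f)=u_1g_1^{s'}$ with $u_1\in R_S^*$ and $g_1\in K$ does not follow from divisibility of $\mathrm{Jac}(C)(k)$. The hypothesis $s'\mid v(\phi^m(f))$ for all $v\notin S$ only says that the class of $\tfrac1{s'}\mathrm{div}(\phi^m(f))|_{C\setminus S}$ is $s'$-torsion in $\mathrm{Pic}(C\setminus S)$; since the latter is a quotient of $\mathrm{Jac}(C)(k)$ it has $s'$-torsion of order up to $(s')^{2g}$, so the class need not vanish. Concretely: on an elliptic curve with $S=\{O\}$ and $P$ a nontrivial $2$-torsion point, a function $\alpha$ with $\mathrm{div}(\alpha)=2[P]-2[O]$ has $2\mid v(\alpha)$ for all $v\ne O$, yet is not a square times a constant because $[P]-[O]$ is not principal. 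The paper bypasses this with an elementary B\'ezout identity: from $ar+bs=1$ one has $\phi^m(\alpha)=u^a\bigl((\phi^n(\alpha))^a(\phi^m(\alpha))^b\bigr)^s$ with an explicit base in $K$, so Lemma~\ref{lem3.4} applies to $\phi$ directly (the unit prefactor being harmless since the bound depends only on the root structure). This also dissolves the absorption obstacle you flag.

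Second, your appeal to Corollary~\ref{cor2.10} in the residual case $n=0$, $f\in R_S$ is invalid: that corollary requires $\phi^2\notin\bar K[z]$, which fails here since $\phi$ is a polynomial, and indeed every iterate of an $R_S$-integral point under a polynomial stays $R_S$-integral. This subcase is not a genuine obstruction once $s'$ is bounded: then $\rho\le\log_d B+1$ independently of $r'$, so Theorem~\ref{th3.3} already covers all $m$ above this threshold; for the finitely many smaller $m$ one checks, after enlarging $S$ by the zeros of $\phi^m(0)\ne0$, that the valuation identity forces $f\in R_S^*$ and hence $\phi^m(f)\in R_S^*$, landing in Theorem~\ref{th3.2}. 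The paper organises all of this differently, splitting first into $\alpha\in R_{S_\phi}$ versus $\alpha\notin R_{S_\phi}$, and after bounding the exponent it appeals to the effective superelliptic bounds of B\'erczes--Evertse--Gy\H ory and to Evertse--Silverman rather than recycling Theorem~\ref{th3.3}; your route via Theorem~\ref{th3.3} is a viable and more self-contained alternative once the two gaps above are repaired.
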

\begin{proof}
Let $\alpha \in \text{Wander}_\phi(K)$ such that there exist non-negative integers $m>n>0$, integers $r$ and $s$, and $u \in R^*_S$ such that \begin{equation}\label{eq3.2}(\phi^m(\alpha))^r=u(\phi^n(\alpha))^s.\end{equation}
If $r=0$ or $s=0$, the result follows from Theorem \ref{th3.2}, so we may assume that $rs \neq 0.$ Also, due to the saturation $R^*_S$, we may assume that gcd$(r,s)=1$. We enlarge $S$ by the places for which $\phi$ has bad reduction, namely, those $v \in M_K$ such that if $f(z)=c_0+c_1z+...+c_dz^d$ then $v(c_i)<0$ for some $i$ or $v(c_d)>0$, obtaining a new finite set of places $S_\phi$. In fact, proving the results for the larger $S_f$ gives results that are stronger than the original statements. In this case, one can see that $$S_{\phi^m}\subset S_\phi \text{ for all }m \geq 1.$$ Since $rs \neq 0,$  replacing $r,s$ by $-r,-s$ if necessary, we may assume that $r>0$. We now divide the proof in  cases.\newline
\textbf{Case A: $\alpha \in R_{S_\phi}$.}\newline
By definition one can check that $\phi^k(\alpha) \in R_{S_\phi}$ for every $k\geq 0$, so that
$$v(\phi^k(\alpha))\geq 0 \text{ for all } k \geq 0, v \notin S_\phi.$$ We now divide this step in some subcases.\newline
\textbf{Case A.1: $r>0, s<0$.}\newline Here, equation \ref{eq3.2} becomes
$$(\phi^m(\alpha))^r(\phi^n(\alpha))^t=u \text{ with } t=-s>0.  $$
As $u \in R^*_{S_\phi},$ it follows that
$$
rv(\phi^m(\alpha))+tv((\phi^n(\alpha))=0 \text{ for all } v \notin S_\phi.
$$Since $r,t>0$, this implies that 
$$
v(\phi^m(\alpha))=v((\phi^n(\alpha))=0 \text{ for all } v \notin S_\phi,
$$ and hence that $\phi^m(\alpha) \in R^*_S$. The conclusions come then from Theorem \ref{th3.2}.\newline
\textbf{Case A.2: $r>0, s\geq 2$.}\newline Since gcd$(r,s)$, we can choose $a$ and $b$ with $ar+bs=1$ so that \ref{eq3.2} becomes $$\phi^m(\alpha)=u^a\left((\phi^n(\alpha))^a(\phi^m(\alpha))^b  \right)^s. $$Since $\phi^m(\alpha) \in R_{S_\phi}$ and $u \in R^*_{S_\phi}$, we have that $(\phi^n(\alpha))^a(\phi^m(\alpha))^b \in R_{S_\phi}$. If $(\phi^n(\alpha))^a(\phi^m(\alpha))^b \in R_{S_\phi}^*$, then $\phi^m(\alpha) \in R_{S_\phi}^*$ and we have the desired results by Theorem \ref{th3.2}. If not, then we also have that $(\phi^n(\alpha))^a(\phi^m(\alpha))^b \notin k$. Writing $$\phi^m(\alpha)=\phi(\phi^{m-1}(\alpha)),$$ we use Lemma \ref{lem3.4} to conclude that the exponent $s\geq 2$ is effectively bounded in terms of $\phi$ and $K$.

If we first assume that $\deg \phi \geq 3$, then we can apply \cite[Proposition 4.6]{BEG0} to effectively bound the referred $\phi^{m-1}(\alpha)$'s, and then $m$ and $h(\alpha)$ as desired after using the similar method to the one used in the proof of Theorem \ref{th3.2}(b). 

If otherwise $\deg \phi=2$, we can apply  \cite[Proposition 4.7]{BEG0}, and conclude similarly.\newline
\textbf{Case A.3: $r\geq 2, s=1$.}\newline
If $n \geq 2$, then the same discussion as above holds again (replacing $m$ by $n$ and $r$ by $s$). It is enough then to consider the case $n=1$, so that \ref{eq3.2} becomes $$\phi(\alpha)=u^{-1}(\phi^m(\alpha))^r.$$ If $r \geq 3$, then we can apply \cite[Proposition 4.7]{BEG0} again to conclude.
If otherwise $r=2$, we can apply Theorem \ref{th3.3} to conclude as desired.\newline
\textbf{Case A.4: $r=1, s=1$.}\newline This case is covered by Theorem \ref{th3.3}.\newline
\textbf{Case B: $\alpha \notin R_{S_\phi}$.}\newline
Choosing $v \in M_K\setminus S_\phi$ such that $v(\alpha)<0$, we have also that $v \notin S_{\phi^k}$ for all $k\geq1$. In this case, we have that $v(\phi^k(\alpha))=d^kv(\alpha)$ for all $k\geq 0$ and hence \ref{eq3.2} implies
$$
rd^mv(\alpha)=sd^nv(\alpha).
$$ Therefore $r=1$ and $s=d^{m-n}$, and \ref{eq3.2} becomes
$$
\phi^{n+k}(\alpha)=u(\phi^k(\alpha))^{d^{n}}.
$$If $n=1$, $\phi^{1+k}(\alpha)=u(\phi^k(\alpha))^{d^{n}},$ and we can use Theorem \ref{th3.2}(a) applied to the rational function $f(X)/X^d$ to conclude the desired. 

Otherwise, we first enlarge $S_\phi$ in a controlled way such that $\text{disc}(\phi), \text{disc}(\phi^2) \in R^*_{S_\phi}$. Moreover, we note that the group $R^*_{S_\phi}/(R^*_{S_\phi})^{d^2}$ is finite and effectively bounded due to \cite[III, Prop.2.3.2]{Silv0}, allowing us to replace $K$ by the finite extension $L$ generated by the set of values $$
\{\beta \in \overline{K} : \beta^{d^2} \in R^*_{S_\phi} \},
$$ which depends only on $K,d$ and $S$.

If $d\geq 3$ or $d=2$, we consider  respectively the curves $$\phi(X)=Y^{d^2} \text{ or }\phi^2(X)=Y^{d^2}.$$
In both cases, we are able to apply \cite[Theorem 2]{ES} to obtain a set $T_1$ as in the statement with the desired finiteness properties.\end{proof}
\subsection{Iterates as zeros of split polynomials} Here we would like to point that Theorem 1.10 of \cite{BOSS} also is true over function fields of characteristic $0$, making use of Lemma \ref{lem2.5}. We start recalling the following definition
\begin{definition}
 We define a multilinear polynomial with split variables to be a vector of polynomials
 $$
F(\textbf{T}_1,...,\textbf{T}_k)=\sum_{i=1}^r c_i \prod_{j \in J_i} \textbf{T}_j \in  K[\textbf{T}_1,...,\textbf{T}_k]
 $$ for some disjoint partition $J_1 \cup ...\cup J_r=\{1,...,s\}$ and $c_i \in K^*, i=1,...,r$.
 \end{definition}
 
  \begin{theorem}  Let $K$ be a function field of a smooth projective curve over an algebraically closed field of characteristic $0$.
 Let $F(\textbf{T}_1,...,\textbf{T}_k) \in K[\textbf{T}_1,...,\textbf{T}_k]$ be a multilinear polynomial with split variables and let $\phi \in K(z)$ be a non-isotrivial rational function degree $d \geq 2$.
 
   The set of  $\alpha \in \overline{K}$ not preperiodic for $\phi$, for which there exists a $k$-tuple of distinct non-negative integers $(n_1,...,n_k)$  satisfying $$F(\phi^{(n_1)}(\alpha),...,\phi^{(n_k)}(\alpha))=0  $$ is a set of bounded height. If $d\geq 3$, such heights are bounded effectively in terms of $\phi, F$ and $K$.
   
   Moreover, there are only finitely many $k$-tuples of integers $n_1>n_2>...>n_k$ satisfying $F(\phi^{(n_1)}(\alpha),...,\phi^{(n_k)}(\alpha))=0$, and there is a bound for such integers that depend only on $\phi, F, K$ and $\inf_{\hat{h}_\phi(P)>0} \hat{h}_\phi(P)$, independent on $\alpha$.
\end{theorem}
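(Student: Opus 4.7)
The plan is to adapt the strategy of \cite[Theorem 1.10]{BOSS} to the function-field setting, relying on the tools developed in Section 2 -- primarily Lemma \ref{lem2.4} (comparing the naive and canonical heights) and Lemma \ref{lem2.5} (Baker's Northcott-type property for $\hat{h}_\phi$).

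First, I would reorder the tuple so that $n_1 > n_2 > \cdots > n_k \geq 0$, and relabel the partition $J_1, \dots, J_r$ of $\{1, \dots, k\}$ so that $1 \in J_1$. Since $F$ is multilinear with split variables, the variable $T_1$ appears only linearly, so the relation $F(\phi^{(n_1)}(\alpha), \dots, \phi^{(n_k)}(\alpha)) = 0$ can be solved for $\phi^{(n_1)}(\alpha)$, yielding
\[
\phi^{(n_1)}(\alpha) \;=\; -\,\frac{\sum_{i \neq 1} c_i \prod_{j \in J_i} \phi^{(n_j)}(\alpha)}{c_1 \prod_{j \in J_1 \setminus \{1\}} \phi^{(n_j)}(\alpha)}.
\]
Applying the standard product and sum height inequalities to this expression, together with the comparison $|h(\phi^{(n)}(\alpha)) - d^{n}\hat{h}_\phi(\alpha)| \leq c_3 h(\phi) + c_4$ from Lemma \ref{lem2.4}(c), I expect to derive an inequality of the form
\[
\Bigl(d^{n_1} - \sum_{j=2}^k d^{n_j}\Bigr)\hat{h}_\phi(\alpha) \;\leq\; C_1,
\]
for an effective constant $C_1 = C_1(\phi, F, K, k)$ incorporating $h(F)$, $h(\phi)$ and the constants from Lemma \ref{lem2.4}.

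Next, a simple combinatorial estimate shows that for any strictly decreasing sequence $n_1 > n_2 > \cdots > n_k \geq 0$,
\[
d^{n_1} - \sum_{j=2}^k d^{n_j} \;\geq\; \kappa_{d,k}\,d^{\,n_1 - k + 1},
\]
for an explicit positive $\kappa_{d,k}$ (with equality in the densely-packed case $n_j = n_1 - j + 1$). Since $\alpha$ is not preperiodic, Lemma \ref{lem2.5} gives $\hat{h}_\phi(\alpha) \geq \epsilon_0 := \inf_{\hat{h}_\phi(P) > 0} \hat{h}_\phi(P) > 0$, and combining the two estimates yields
\[
d^{\,n_1 - k + 1} \;\leq\; \frac{C_1}{\kappa_{d,k}\epsilon_0},
\]
which bounds $n_1$ (and hence each $n_j \leq n_1$) in terms of $\phi$, $F$, $K$, and $\epsilon_0$. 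This yields the ``moreover'' conclusion. For the bound on $h(\alpha)$, observe that $d^{n_1} - \sum_{j=2}^k d^{n_j}$ is a positive integer, so the main inequality gives $\hat{h}_\phi(\alpha) \leq C_1$ unconditionally, and Lemma \ref{lem2.4}(c) then yields $h(\alpha) \leq C_1 + c_3 h(\phi) + c_4$. Under the assumption $d \geq 3$, the constants $\kappa_{d,k}$ and those from Lemma \ref{lem2.4} admit clean explicit expressions, giving the effective bound claimed in the statement.

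The hardest step will be the first height estimate: one has to handle both the numerator and the denominator of the rational expression for $\phi^{(n_1)}(\alpha)$, particularly when $|J_1| \geq 2$, and dispose of the degenerate case in which $\phi^{(n_j)}(\alpha) = 0$ for some $j \in J_1 \setminus \{1\}$ (which confines $\alpha$ to a $\phi$-preimage of $0$, a finite explicit set to be treated separately). A secondary delicate point is that the combinatorial lower bound $d^{n_1} - \sum_{j=2}^k d^{n_j} \geq d^{\,n_1 - k + 1}$ is sharp in the densely packed configuration; tracking this sharpness across the various possible partitions $\{J_i\}$ is what forces the restriction to $d \geq 3$ for the cleanest effective statement.
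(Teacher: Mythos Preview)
Your proposal is correct and takes essentially the same approach as the paper: the paper's own proof simply states that the argument of \cite[Theorem~1.10]{BOSS} carries over verbatim once the number-field lower bound on $\hat{h}_\phi$ is replaced by Baker's Lemma~\ref{lem2.5}, and what you have written is exactly an outline of that BOSS argument with this substitution made. Your treatment of the degenerate denominator case and of the role of the hypothesis $d\ge 3$ is somewhat sketchy, but these are details already handled in the cited source rather than points where your strategy diverges from the paper's.
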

\begin{proof}
The proof follows the number field situation \cite[Theorem 1.10]{BOSS} almost ipsis literis, except that we have to point that the quanitity $C_2(K,f)$ in that proof is replaced here by $\inf_{\hat{h}_\phi(P)>0} \hat{h}_\phi(P)$, which we know to exist due to Lemma \ref{lem2.5}.
\end{proof}

\textbf{Acknowledgements}: I am thankful to Australian Research Council Discovery Grant DP180100201  and UNSW for supporting me in this research.

\end{document}